\numberwithin{equation}{section}
\newtheorem{Definition}{Definition}[section]
\newtheorem{Notation}[Definition]{Notation}
\newtheorem{Lemma}{Lemma}[section]
\newtheorem{Theorem}[Definition]{Theorem}
\newtheorem{Proposition}[Definition]{Proposition}
\newtheorem{Example}[Definition]{Example}
\newtheorem{Corollary}[Definition]{Corollary}
\newtheorem{apxlemma}{Lemma}[section]
\newcounter{arxiv}
\begin{document}

\begin{frontmatter}



\title{Estimating the spectral radius of Bell-type operator via finite dimensional approximation of orthogonal projections}


\author[melco]{Yuki Fujii\corref{cor1}} 
\ead{Fujii.Yuki@df.MitsubishiElectric.co.jp}
\cortext[cor1]{Corresponding author.}

\author[melco]{Toyohiro Tsurumaru} 

\affiliation{organization={Mitsubishi Electric Corporation, Information Technology R\&D Center},
            addressline={5-1-1 Ofuna}, 
            city={Kamakura-shi},
            postcode={247-8501}, 
            state={Kanagawa},
            country={Japan}}

\begin{abstract}

We establish a new decomposition formula for two orthogonal projections $P$ and $Q$ on a separable Hilbert space $V$. This formula yields an orthogonal direct sum decomposition of $V$ into invariant subspaces under $P$ and $Q$, each of which is either at most two dimensional or infinite dimensional. On every infinite dimensional component, the pair $(P,Q)$ admits a matrix representation that we call the ``one-shifted form''. This representation diagonalizes both $P$ and $Q$ into blocks of size at most two, and moreover,
both projections can be explicitly approximated by orthogonal projections on finite dimensional subspaces. This approximation scheme offers a way to derive infinite dimensional results from their finite dimensional counterparts and is also useful in numerical computations.

This decomposition provides a useful framework for analyzing a wide
range of problems involving two orthogonal projections in infinite
dimensions. 
In particular, several spectral problems for operators generated by $P$ and $Q$
(including polynomials in $P$ and $Q$)
can be reduced to the case where the pair admits a one-shifted form.

More concretely, we can estimate the spectral radius $\rho([P,Q])$, which is equivalent to estimating the spectral radius of the Bell-CHSH operator, a quantity of fundamental importance in quantum mechanics.
We provide an upper bound and a lower bound for $\rho([P,Q])$, which become exact when the matrix representations of $P$ and $Q$ are in ``constant-angle one-shifted form''.
\end{abstract}


\ifthenelse{\value{arxiv} = 0}{
\input 00-02_specific_journal_20260130a.tex
}

\begin{keyword}
Orthogonal projections \sep Infinite dihedral group \sep Free product of cyclic groups of order 2 \sep Bell-CHSH operator \sep Tsirelson's inequality \sep Device-independent quantum key distribution \sep Tridiagonal matrix
\end{keyword}

\end{frontmatter}

\ifthenelse{\value{arxiv} = 1}{
\tableofcontents
}

\noindent \textbf{MSC:} 20C07, 47A25

\section{Introduction}
\label{sec:intro}
The problem of studying properties of orthogonal projections on a separable Hilbert space $V$ arises in various fields, including quantum mechanics, and has been extensively studied from a mathematical perspective \cite{ABottcher2010}.

For instance, given two orthogonal projections $P$ and $Q$ on $V$, let
$A = 2P - \operatorname{id}_V, B = 2Q - \operatorname{id}_V$, and
consider the problem of computing the spectral radius $\rho([A,B])$ of the commutator $[A,B]$, defined by\\
\begin{equation*}
\rho([A,B]) = \sup_{u \in V, \|u\| = 1} |([A,B]u, u)|.
\end{equation*}

This problem is known to be equivalent to determining the spectral radius of the Bell-CHSH operator \cite{AChefles1997},
which is a bounded self-adjoint operator on a Hilbert tensor space $V_1 \otimes V_2$ used in quantum mechanics, as shown in \cite{LAKhalfin1985, LJLandau1987}. For the definition of the Bell-CHSH operator and the background knowledge assumed in this paper, see Section~\ref{subsec:chshoperatorintroduction}.

The problem of determining the spectral radius of the Bell-CHSH operator has important applications, such as testing the violation of local realism in quantum mechanics \cite{JFClauser1969}, and evaluating the security of device-independent quantum key distribution protocols \cite{SPironio2009}. Owing to these applications, the computation of $\rho([A,B])$ is important.

When $V$ is finite dimensional, this problem can be solved immediately by the existing results. 
First, it is known that 
there is a one-to-one correspondence between pairs of orthogonal projections on $V$ and unitary representations of the infinite dihedral group $D_\infty$ on $V$.
Using group representation theory,
Behncke \cite{HBehncke1971} showed that, regardless of whether $V$ is finite or infinite dimensional, $V$ is irreducible only if $\dim V \leq 2$. Here, $V$ is said to be irreducible if every invariant subspace under $P$ and $Q$ is trivial.

By combining this result with the general theory of decomposition of group representations into irreducible components, it follows that if $V$ is finite dimensional, then $V$ can be decomposed as an orthogonal direct sum of invariant subspaces under $P$ and $Q$, each of which is at most two dimensional.
In quantum information theory, this fact is known as Jordan's lemma \cite{Primaatmaja2023, MMcKague2012}.

Using this decomposition, the problem of computing $\rho([A,B])$ in finite dimensional $V$ reduces to the case where $\dim V \leq 2$. In these cases, it is known that $P$ and $Q$ admit a simple matrix representation using Halmos' theorem \cite{PRHalmos1969}. Based on this representation and elementary calculations, we find that $\rho([A,B])$ can be computed using the eigenvalues of $A + B$.

From these observations, we obtain the following proposition.

\begin{Proposition}
\label{prop:commutorspectrum_finitedimension}
Assume that $V$ is finite dimensional and let $\lambda$ be an eigenvalue of $A + B$ whose square is closest to $2$. Then
\begin{equation}
\label{eq:commutorspectrum_finitedimension}
\rho([A,B]) = b(\lambda),
\end{equation}
where $b$ is defined by
$
\displaystyle b(\lambda) = 2\sqrt{1 - \left( \dfrac{\lambda^2}{2} - 1 \right)^2}\quad(\lambda \in \mathbb{R})
$.
\end{Proposition}

On the other hand, when $V$ is infinite dimensional, it is not known whether or how equation (\ref{eq:commutorspectrum_finitedimension}) can be extended, either in the form of an equality or an inequality. One reason for this is the existence of pairs of orthogonal projections on an infinite dimensional Hilbert space that admit no nonzero finite dimensional invariant subspaces under both projections \cite{GAllan1998}.

In this paper, we consider the problem of extending equation (\ref{eq:commutorspectrum_finitedimension}) in the form of an inequality as a representative example. As in this case, there are several known problems concerning the properties of orthogonal projections that have only been resolved in the finite dimensional case.

This work makes three main contributions.
First, we prove that every finite family of orthogonal projections on an infinite dimensional Hilbert space can be approximated, in the sense of Theorem~\ref{theorem:oneshifterform}, by orthogonal projections on finite dimensional subspaces. 
For details of this result, see Section~\ref{subsec:finitedimensionalapproximationbasicproperties}.
This approximation allows certain inequalities to be reduced to the finite dimensional case. 
This method is stated in Section~\ref{subsec:reduction}. 

As a simple application, in Section~\ref{subsec:tsirelson}, we present  a new proof of Tsirelson's inequality \cite{BSTsirelson1980}, which asserts that the spectral radius of every Bell-CHSH operator is bounded above by $2\sqrt{2}$.
We compare our proof with others.
When either $V_1$ or $V_2$ has dimension at most two, Tsirelson's inequality can be proved using elementary calculus.
Tsirelson~\cite{BSTsirelson1987} showed that this inequality can be reduced to the case of dimension at most two by an ingenious technique using the Clifford algebra. 
Based on our approximation method and Jordan's lemma, we reduce the proof to this case.

Second, for two orthogonal projections $P$ and $Q$, we provide
an explicit formula for a finite dimensional approximation of the pair $(P,Q)$.
This can be summarized as Theorem~\ref{theorem:oneshifterform}.
For details of this theorem, see Section~\ref{sec:two}.

\begin{Theorem}
\label{theorem:oneshifterform}
There exists an orthogonal direct sum decomposition of $V$ into invariant subspaces under $P$ and $Q$, 
each of which is either at most two dimensional or infinite dimensional. Every such infinite dimensional component
 admits an explicit finite dimensional approximation of the pair $(P,Q)$ in the following sense.
There exists an orthonormal basis $\mathcal{U} = \{u_i\}_{i=1}^\infty$ and a pair of sequences
$\Theta = \{\theta_i\}_{i=1}^\infty, \Omega = \{\omega_i\}_{i=1}^\infty \subset (0,\pi)$ such that, for any vector $v \in V$,
\begin{equation*}
\max\{\|v - \pi_n v\|, \|Pv - P_n(\Theta)\pi_n v\|, \|Qv - Q_n(\Omega)\pi_n v\|\} \to 0 \quad (n \to \infty),
\end{equation*}
where $\pi_n$ is the orthogonal projection onto the finite dimensional subspace $V_n$ spanned by $\{u_1, \dots, u_{2n}\}$, and $P_n(\Theta)$, $Q_n(\Omega)$ are orthogonal projections on $V_n$.

Moreover, the matrix representations of $P_n(\Theta)$ and $Q_n(\Omega)$ in this orthonormal basis are symmetric tridiagonal matrices given by:
\begin{align*}
2P_n(\Theta) - E_{2n} &=
\left(\begin{array}{>{\centering\arraybackslash}p{0.3cm}>{\centering\arraybackslash}p{0.3cm}>{\centering\arraybackslash}p{0.3cm}>{\centering\arraybackslash}p{0.3cm}>{\centering\arraybackslash}p{0.3cm}}
\cline{1-2}
\multicolumn{2}{|c|}{\multirow{2}{*}{$r(\theta_1)$}} & \multicolumn{1}{c}{} & \multicolumn{2}{c}{\multirow{2}{*}{\mbox{\Large $O$}}}\\
\multicolumn{1}{|c}{} & \multicolumn{1}{c|}{} & \multicolumn{1}{c}{} & & \\ \cline{1-2}
\multicolumn{2}{c}{} & $\ddots$ & \multicolumn{2}{c}{}\\
 \cline{4-5}
\multicolumn{2}{c}{\multirow{2}{*}{\mbox{\Large $O$}}} & \multicolumn{1}{c}{\multirow{2}{*}{}} & \multicolumn{2}{|c|}{\multirow{2}{*}{$r(\theta_n)$}}\\
& \multicolumn{1}{c}{} & \multicolumn{1}{c}{} & \multicolumn{1}{|c}{} & \multicolumn{1}{c|}{} \\ \cline{4-5}
\end{array}\right),\\
2Q_n(\Omega) - E_{2n} &=
\left(
\begin{array}{>{\centering\arraybackslash}p{0.3cm}>{\centering\arraybackslash}p{0.3cm}>{\centering\arraybackslash}p{0.3cm}>{\centering\arraybackslash}p{0.3cm}>{\centering\arraybackslash}p{0.3cm}>{\centering\arraybackslash}p{0.3cm}>{\centering\arraybackslash}p{0.3cm}}
\cline{1-1}
\multicolumn{1}{|c|}{\multirow{1}{*}{$1$}} & $0$ & $0$ & \multicolumn{1}{c}{\multirow{3}{*}{}} & \multicolumn{3}{c}{\multirow{3}{*}{\mbox{\Large $O$}}}\\ \cline{1-3}
\multicolumn{1}{c}{0}  & \multicolumn{2}{|c|}{\multirow{2}{*}{$r(\omega_1)$}} & \multicolumn{1}{c}{} & & &  \\
\multicolumn{1}{c}{0} & \multicolumn{1}{|c}{} & \multicolumn{1}{c|}{} & \multicolumn{1}{c}{} &  &  &  \\  \cline{2-3}
\multicolumn{3}{c}{} & $\ddots$ & \multicolumn{3}{c}{} \\ \cline{5-6}
\multicolumn{3}{c}{\multirow{3}{*}{\mbox{\Large $O$}}} & \multicolumn{1}{c}{\multirow{3}{*}{}} & \multicolumn{2}{|c|}{\multirow{2}{*}{$r(\omega_{n-1})$}} & \multicolumn{1}{c}{0} \\
 & & \multicolumn{1}{c}{} & \multicolumn{1}{c}{} & \multicolumn{2}{|c|}{} & \multicolumn{1}{c}{0} \\ \cline{5-7}
 & & \multicolumn{1}{c}{} & \multicolumn{1}{c}{} & \multicolumn{1}{c}{0} & \multicolumn{1}{c}{0} & \multicolumn{1}{|c|}{\multirow{1}{*}{$-1$}} \\ \cline{7-7}
\end{array}
\right),
\end{align*}
where $r$ is defined by
$r(x) = \begin{pmatrix} \cos x & \sin x \\ \sin x & -\cos x \end{pmatrix} \quad (x \in \mathbb{R})$.
\end{Theorem}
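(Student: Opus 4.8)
The plan is to build $\mathcal{U}$ by an alternating Gram--Schmidt (Lanczos-type) recursion driven by the two reflections $A = 2P - \operatorname{id}_{V}$ and $B = 2Q - \operatorname{id}_{V}$, which are self-adjoint and satisfy $A^2 = B^2 = \operatorname{id}_{V}$. First I reduce to the case in which $V$ is generated as a $D_\infty$-module by a single unit vector lying in $R(Q)$: by the structure theory of pairs of orthogonal projections (Halmos' two-projections theorem \cite{PRHalmos1969} together with spectral multiplicity) a general pair $(P,Q)$ decomposes $V$ orthogonally into invariant subspaces of dimension at most $2$, handled by Jordan's lemma, and infinite-dimensional cyclic pieces of this kind, so it suffices to treat one such piece; this is the structural content behind Definition~\ref{def:oneshiftedform}.

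Fix a unit cyclic vector $u_1 \in R(Q)$, so that $Bu_1 = u_1$ (the leading ``$1$''). Put $\cos\theta_1 = (Au_1, u_1)$, $\sin\theta_1 = \sqrt{1 - \cos^2\theta_1}$ and $u_2 = (\sin\theta_1)^{-1}(Au_1 - \cos\theta_1\, u_1)$; using $A^2 = \operatorname{id}_{V}$ one checks that $u_2$ is a unit vector orthogonal to $u_1$ with $Au_1 = \cos\theta_1 u_1 + \sin\theta_1 u_2$ and $Au_2 = \sin\theta_1 u_1 - \cos\theta_1 u_2$, i.e. $A$ acts on $\operatorname{span}(u_1, u_2)$ by $r(\theta_1)$. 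Then alternate: from an odd-indexed $u_{2k-1}$ apply $A$ and orthonormalize against $u_{2k-1}$ to obtain $u_{2k}$ and an angle $\theta_k$; from an even-indexed $u_{2k}$ apply $B$ and orthonormalize against $u_{2k}$ to obtain $u_{2k+1}$ and an angle $\omega_k$. The recursion is consistent because each new vector is automatically orthogonal to \emph{all} earlier ones: for $j \le 2k-2$ one has $(Au_{2k-1}, u_j) = (u_{2k-1}, Au_j) = 0$ since $Au_j \in \operatorname{span}(u_1, \dots, u_{2k-2})$ by the block structure already built, and symmetrically for $B$; and, exactly as in the first step, $A^2 = B^2 = \operatorname{id}_{V}$ forces $A$ to act by $r(\theta_k)$ on $\operatorname{span}(u_{2k-1}, u_{2k})$ and $B$ by $r(\omega_k)$ on $\operatorname{span}(u_{2k}, u_{2k+1})$. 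Hence, in the basis $\mathcal{U} = \{u_i\}$, $2P - \operatorname{id}_{V} = r(\theta_1) \oplus r(\theta_2) \oplus \cdots$ and $2Q - \operatorname{id}_{V} = [1] \oplus r(\omega_1) \oplus r(\omega_2) \oplus \cdots$ on $\overline{\operatorname{span}\,\mathcal{U}}$; truncating to $V_n = \operatorname{span}(u_1, \dots, u_{2n})$ yields precisely the displayed $P_n(\Theta)$ and $Q_n(\Omega)$ (the trailing ``$-1$'' being the truncation of the block $r(\omega_n)$), and since $r(x)^2 = E_2$ these truncations are orthogonal projections.

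It remains to check that all $\theta_k, \omega_k$ lie in $(0,\pi)$ and that $\mathcal{U}$ is complete, and to verify the three limits. If some $\sin\theta_k = 0$, then $Au_{2k-1} = \pm u_{2k-1}$ and $\operatorname{span}(u_1, \dots, u_{2k-1})$ is a finite-dimensional subspace invariant under both $A$ and $B$, contradicting that its closure is the infinite-dimensional cyclic module $V$; the case $\sin\omega_k = 0$ is identical. The same dichotomy shows the recursion never halts and that $\overline{\operatorname{span}\,\mathcal{U}}$, being a $D_\infty$-submodule containing $u_1$, equals $V$; thus $\mathcal{U}$ is an orthonormal basis and $\|v - \pi_n v\| \to 0$. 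Since $V_n$ is exactly $P$-invariant, $P_n(\Theta)\pi_n v = P\pi_n v$, so $\|Pv - P_n(\Theta)\pi_n v\| = \|P(v - \pi_n v)\| \le \|v - \pi_n v\| \to 0$. Finally, $Q_n(\Omega)u_i = Qu_i$ for $i \le 2n-1$ and $Q_n(\Omega)$ differs from $Q$ only at $u_{2n}$, whence $Q\pi_n v - Q_n(\Omega)\pi_n v = (v, u_{2n})\,Qu_{2n}$ and $\|Qv - Q_n(\Omega)\pi_n v\| \le \|v - \pi_n v\| + |(v, u_{2n})| \to 0$; taking the maximum of the three quantities completes the proof.

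The main obstacle is the reduction in the first step — the analysis of pairs of projections that brings $V$ into cyclic form with a cyclic vector in $R(Q)$ — rather than the recursion or the estimates, which become routine once $A^2 = B^2 = \operatorname{id}_{V}$ is exploited. Within the recursion, the one point requiring care is the non-degeneracy argument keeping each $\theta_k, \omega_k$ strictly inside $(0,\pi)$, supplied by the finite-dimensional-invariant-subspace dichotomy above.
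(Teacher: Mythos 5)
Your alternating Gram--Schmidt recursion, the non-degeneracy argument ($\sin\theta_k, \sin\omega_k \neq 0$ by the finite-dimensional-invariant-subspace dichotomy), the completeness of $\mathcal{U}$, and the three limit estimates are all correct and do constitute the technical heart of the theorem. In particular the key estimates are right: $V_n$ is genuinely $P$-invariant so $\|Pv - P_n\pi_n v\| \le \|v - \pi_n v\|$, while $Q_n$ agrees with $Q$ on $u_1,\dots,u_{2n-1}$ and annihilates $u_{2n}$, so $\|Qv - Q_n\pi_n v\| \le \|v - \pi_n v\| + |(v,u_{2n})|$, both of which vanish.

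The genuine gap is the reduction you flag in your own last paragraph and then do not carry out. You assert that ``by Halmos' two-projections theorem together with spectral multiplicity'' a general pair $(P,Q)$ decomposes orthogonally into at most two-dimensional invariant subspaces and infinite-dimensional $D_\infty$-cyclic pieces each possessing a cyclic vector lying in $R(Q)$. That claim is true, but it is not an immediate citation: you need, roughly, to (a) decompose $R(Q)$ into cyclic subspaces $W_j$ for the compression $QPQ|_{R(Q)}$, with cyclic vectors $w_j \in R(Q)$; (b) prove that the $D_\infty$-cyclic spans $M_j = \overline{C^*(P,Q)w_j}$ are \emph{pairwise orthogonal} --- this uses that for any word $W$ in $P,Q$, the compression $QWQ$ maps $W_{j'}$ into $W_{j'}$, so $(w_j, QWQ w_{j'}) = 0$ for $j \neq j'$; and (c) identify $(\bigoplus_j M_j)^\perp$ with $\ker Q$, on which $B = -\operatorname{id}$ and the representation splits into one-dimensional pieces. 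Without (b) in particular the decomposition into orthogonal cyclic pieces with cyclic vector in $R(Q)$ does not follow from ``spectral multiplicity.'' Since you yourself call this the ``main obstacle'' and then dispatch it in one clause, this is the place where the proposal falls short of a proof. (Whether the reduction is even part of Theorem~\ref{theorem:oneshifterform} depends on the hypothesis carried from the surrounding section defining the one-shifted form; if the theorem already assumes a cyclic vector in $R(Q)$, the reduction is unnecessary and your proof is essentially complete, but you should then not frame it as a reduction from the general case.)

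One small imprecision: the trailing $-1$ in $2Q_n(\Omega) - E_{2n}$ is not the ``truncation of the block $r(\omega_n)$'' --- truncating $r(\omega_n)$ to its $(1,1)$-entry would give $\cos\omega_n$, which is not $\pm 1$ when $\omega_n \in (0,\pi)$. It is a \emph{replacement}: one deliberately sets $Q_n u_{2n} = 0$ (equivalently, $(2Q_n - E_{2n})u_{2n} = -u_{2n}$) precisely so that $2Q_n - E_{2n}$ is an involution and hence $Q_n$ a projection. Your later estimate already uses this correctly, so it is only the wording that needs fixing.
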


We refer to the infinite dimensional matrix representations admitting this approximation as the \emph{one-shifted form}. 
For the definition of one-shifted form, see Definition~\ref{def:oneshiftedform}.

Theorem~\ref{theorem:oneshifterform} is the main result of this paper.
From the viewpoint of group representation theory, two orthogonal projections having a one-shifted form yield a cyclic unitary representation of $D_\infty$. Theorem~\ref{theorem:oneshifterform} thus provides an explicit formula that decomposes every infinite dimensional unitary representation of $D_\infty$ into cyclic representations.
In other words, the one-shifted form (Definition~\ref{def:oneshiftedform}) extends Jordan's lemma to the infinite dimensional case. 
As a consequence, a variety of problems concerning infinite dimensional unitary representations of $D_\infty$
(two orthogonal projections in an infinite dimensional Hilbert space)—such as studying the spectrum of operators given by polynomials in $P$ and $Q$—can be reduced to the case where the pair admits a one-shifted form.

Thus we can reduce the problem of estimating $\rho([A,B])$
 to the case where $P$ and $Q$ admit a one-shifted form.

Third, 
when $P$ and $Q$ admit a one-shifted form,
we derive the inequalities as stated in Theorem~\ref{theorem:commutorspectrum_general}. This theorem estimates the spectral radius of $[A,B]$ from above and below using the eigenvalues of symmetric tridiagonal matrices approximating $A + B$, based on Theorem~\ref{theorem:oneshifterform}. Theorem~\ref{theorem:commutorspectrum_general} is an extension of equation (\ref{eq:commutorspectrum_finitedimension}) to infinite dimensional Hilbert spaces.

\begin{Theorem}
\label{theorem:commutorspectrum_general}
Assume that $P$ and $Q$ admit the approximation as in {\rm{Theorem~\ref{theorem:oneshifterform}}}. Then we have
\begin{equation}
\label{eq:theorem:commutorspectrum_general}
\sup_{\lambda \in F(A+B) \cup \{0\}}b(\lambda) \le \rho([A,B]) \le \liminf_{n \to \infty}b(\lambda_{n}),
\end{equation}
where for $n \in \mathbb{N}$, we define $A_n=2P_n-\operatorname{id}_{V_{n}}, B_n=2Q_n-\operatorname{id}_{V_{n}}$, and let $\lambda_n$ be an eigenvalue of $A_n+B_n$ such that $\lambda_n^2$ is closest to $2$.

The set $F(A+B)$ consists of those $\lambda \in (0,2)$ for which there exist a sequence $\{\eta_k\}_{k=1}^\infty$ and vectors $v_{-,k},v_{+,k} \in V_{k}$ $(k=1,2,\dots)$ satisfying the following conditions:
\begin{enumerate}
\item[(i)] For any $k \in \mathbb{N}$, $-\eta_k$ and $\eta_k$ are eigenvalues of the symmetric tridiagonal matrix $A_k+B_k$, and $v_{-,k}, v_{+,k}$ are unit eigenvectors corresponding to $-\eta_k$ and $\eta_k$, respectively.
\item[(ii)] $\displaystyle \lim_{k \to \infty}\eta_k = \lambda, \quad \lim_{k \to \infty}\max\{\|(P-P_k) v_{\pm,k}\|,\|(Q-Q_k) v_{\pm,k}\|\}=0$.
\end{enumerate}
\end{Theorem}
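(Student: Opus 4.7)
The plan is to turn the statement into a spectral estimate for the self-adjoint operator $M := A+B$ and then compare its spectrum with those of the finite dimensional approximants $M_n := A_n + B_n$.

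Because $A^2 = B^2 = \operatorname{id}_V$, a direct computation gives $(A+B)(A-B) = -[A,B]$, $(A-B)(A+B) = [A,B]$, so $(A-B)^2 = 4\operatorname{id}_V - M^2$ and
\begin{equation*}
-[A,B]^2 \;=\; M^2\bigl(4\operatorname{id}_V - M^2\bigr) \;=\; 4\operatorname{id}_V - \bigl(M^2 - 2\operatorname{id}_V\bigr)^2.
\end{equation*}
Since $[A,B]$ is skew-adjoint (hence normal), $\rho([A,B]) = \|[A,B]\|$, and the continuous functional calculus for $M$, together with the identity $b(\mu)^2 = 4 - (\mu^2-2)^2$, yields $\rho([A,B]) = \sup_{\mu \in \sigma(M)} b(\mu)$. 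The same reasoning inside $V_n$, combined with the finite dimensional formula (\ref{eq:commutorspectrum_finitedimension}), gives $\rho([A_n,B_n]) = b(\lambda_n)$.

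For the lower bound I fix $\lambda \in F(M)$ and use conditions (i) and (ii). From $M_k v_{\pm,k} = \pm\eta_k v_{\pm,k}$, $\eta_k\to\lambda$, and $\|(A-A_k)v_{\pm,k}\|, \|(B-B_k)v_{\pm,k}\|\to 0$, I obtain $\|(M\mp\lambda)v_{\pm,k}\|\to 0$, so Weyl's criterion places $\pm\lambda\in\sigma(M)$. Hence $b(\lambda)=b(-\lambda)\le\sup_{\mu\in\sigma(M)}b(\mu)=\rho([A,B])$. Since $b(0)=0\le\rho([A,B])$, taking the supremum over $F(M)\cup\{0\}$ produces the lower bound.

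For the upper bound I extend $A_n, B_n$ to self-adjoint operators on $V$ by $\widetilde{A_n}:=2P_n\pi_n-\pi_n$ and $\widetilde{B_n}:=2Q_n\pi_n-\pi_n$; these agree with $A_n, B_n$ on $V_n$ and vanish on $V_n^\perp$, so $\|[\widetilde{A_n},\widetilde{B_n}]\|=\|[A_n,B_n]\|$, and $\|\widetilde{A_n}\|,\|\widetilde{B_n}\|\le 1$. Theorem~\ref{theorem:oneshifterform} gives $\widetilde{A_n}v\to Av$ and $\widetilde{B_n}v\to Bv$ for every $v\in V$, and the uniform bound propagates this to the commutator: $[\widetilde{A_n},\widetilde{B_n}]v \to [A,B]v$ for every $v$. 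Therefore, for any unit $u\in V$,
\begin{equation*}
|([A,B]u,u)| \;=\; \lim_{n\to\infty}\bigl|([\widetilde{A_n},\widetilde{B_n}]u,u)\bigr| \;\le\; \liminf_{n\to\infty}\|[A_n,B_n]\| \;=\; \liminf_{n\to\infty}b(\lambda_n),
\end{equation*}
and taking the supremum over unit $u$ finishes the proof. The main subtlety will be propagating the strong operator convergence through the non-commuting product $\widetilde{A_n}\widetilde{B_n}$: this relies on the uniform norm bound on $\widetilde{A_n}$ combined with the telescoping identity $\widetilde{A_n}\widetilde{B_n}u - ABu = \widetilde{A_n}(\widetilde{B_n}u - Bu) + (\widetilde{A_n}-A)Bu$, from which the norm convergence $[\widetilde{A_n},\widetilde{B_n}]u\to[A,B]u$ follows.
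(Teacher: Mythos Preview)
Your argument is correct. The upper bound via strong operator convergence of the extended operators $\widetilde{A_n},\widetilde{B_n}$ is essentially the reduction method the paper develops in Section~\ref{subsec:reduction}: pointwise convergence on $V$, propagated through products by the uniform bound $\|\widetilde{A_n}\|,\|\widetilde{B_n}\|\le 1$, and then the finite dimensional formula (\ref{eq:commutorspectrum_finitedimension}).

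Your lower bound, however, takes a cleaner route than the paper. You exploit the functional calculus identity $\rho([A,B])=\sup_{\mu\in\sigma(M)}b(\mu)$ and then need only place $\lambda$ in $\sigma(M)$ via Weyl's criterion, using a \emph{single} approximate eigenvector sequence $\{v_{+,k}\}$. The paper instead works directly with the pair $(v_{-,k},v_{+,k})$: mimicking the two dimensional computation of Fact~\ref{fact:infinitedihedralgroupfinitedimensionalrepresentation}(ii)--(iii), it forms an explicit unit linear combination of the approximate $\pm\eta_k$-eigenvectors and evaluates $([A,B]\,\cdot\,,\,\cdot\,)$ on it, passing to the limit using condition~(ii). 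This is why the definition of $F(A+B)$ demands eigenvectors for \emph{both} signs of $\eta_k$. Your approach is shorter and reveals that only half of the hypothesis in the definition of $F(A+B)$ is actually needed for the inequality; the paper's approach is more hands-on and avoids the global spectral identity, staying closer to the finite dimensional template it has already set up.
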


These inequalities are useful because there exist concrete examples in which equality holds. 
One such case, in which both $\Theta$ and $\Omega$ are equal to a constant $\theta$ in the one-shifted form, is referred to as a \emph{constant-angle one-shifted form}.

In this setting, we can compute the exact value of $\rho([A,B])$ using Theorem~\ref{theorem:commutorspectrum_general} and known results for symmetric tridiagonal matrices:
\begin{equation*}
\rho([A,B])=
\left\{
\begin{array}{ll}
2|\sin(2\theta)| & \quad \left|\theta-\dfrac{\pi}{2}\right| > \dfrac{\pi}{4}, \vspace{2mm} \\
2 & \quad \left|\theta-\dfrac{\pi}{2}\right| \le \dfrac{\pi}{4}.
\end{array}
\right.
\end{equation*}
For the results on symmetric tridiagonal matrices used in this paper, see Section~\ref{subsec:tridiagonalmatrixintroduction}.

As shown in Section~\ref{subsec:jordan}, there exists an example in which the orthogonal projections $P$ and $Q$ have such a one-shifted form but do not admit nonzero finite dimensional invariant subspaces under $P$ and $Q$. For instance, this occurs if $\theta=\pi/2$.

\ifthenelse{\value{arxiv} = 1}{
After the submission of the first version of this preprint \cite{YFujii2025}, B\"ottcher and Spitkovsky \cite{ABottcher2026} obtained an infinite dimensional extension of Proposition~\ref{prop:commutorspectrum_finitedimension}, 
from which a part of our Theorem~\ref{theorem:commutorspectrum_general} can be alternatively derived.
}{
After the submission of the preprint \cite{YFujii2025} of this paper and after the submission of this paper itself, B\"ottcher and Spitkovsky \cite{ABottcher2026} obtained an infinite dimensional extension of Proposition~\ref{prop:commutorspectrum_finitedimension}, 
from which a part of our Theorem~\ref{theorem:commutorspectrum_general} can be alternatively derived.
}
Furthermore, also in \cite{ABottcher2026}, they kindly pointed out minor errors in the earlier version of our preprint and helped us correct them, for which we are sincerely grateful.

\section{Preliminaries}
\label{sec:preliminary}

\subsection{Notations}
\label{subsec:notation}

Throughout this paper, we adopt the following notation.
\begin{itemize}
\item $\mathbb{Z}_2: \mathbb{Z}/2\mathbb{Z}$.
\item $\Gamma_m:$ The free product
of $m$ copies of $\mathbb{Z}_2$, often denoted as $\underbrace{\mathbb{Z}_2 * \cdots * \mathbb{Z}_2}_{\text{$m$ times}}$ for $m \in \mathbb{N}$. Elements $g_i$ are the generators of the $i$-th copy of $\mathbb{Z}_2$.
\item $D_\infty:$ $\Gamma_2$, also known as the infinite dihedral group.
\item $\tau=\tau_{P_1,\dots,P_m}:$ The unitary representation defined by Proposition~\ref{fact:correspondance}, for orthogonal projections $P_1,\dots,P_m$ on a Hilbert space $V$.
\item $\operatorname{id}_V:$ The identity map on a Hilbert space $V$.
\item $E_n:$ The identity matrix on $\mathbb{C}^n$.
\item $\mathbb{C}\langle x_1,\dots,x_m \rangle:$ The set of all non-commutative polynomials over $\mathbb{C}$
with non-commutative variables $x_1,\dots,x_m$.
\item $\displaystyle \bigoplus_{i \in I}T_i:$ The operator on a direct sum $\displaystyle\bigoplus_{i \in I} V_i$ of Hilbert spaces $\{V_i\}_{i \in I}$
for a family $\{T_i\}_{i \in I}$ of bounded linear operators where each $T_i$ acts on $V_i$, the action is defined by
\begin{equation*}
\bigoplus_{i \in I}T_i \sum_{i \in I} v_i=\sum_{i \in I} T_i v_i\quad(i \in I, v_i \in V_i).
\end{equation*}
\item $\sigma(T):$ The spectrum for a bounded linear operator $T$ on a Hilbert space $V$.
\item $\sigma_p(T):$ The set of all eigenvalues of a bounded linear operator $T$ on a Hilbert space $V$.
\item $\rho(T):$ The spectral radius of a bounded normal operator $T$ on a Hilbert space $V$. $\rho(T)$ is defined by
\begin{equation*}
\rho(T)=\sup_{v \in V, \|v\|=1}|(Tv,v)|.
\end{equation*}
\item $\displaystyle \bigotimes_{i=1}^mV_i=V_1 \otimes \cdots \otimes V_m:$ The Hilbert tensor product of separable Hilbert spaces $V_1,\cdots,V_m$.
\item $P_i:$ The abbreviation for $\operatorname{id}_{V_1} \otimes \dots \otimes \operatorname{id}_{V_{i-1}} \otimes P_i \otimes \operatorname{id}_{V_{i+1}} \otimes \dots \otimes \operatorname{id}_{V_m}$ acting on a Hilbert tensor product $\displaystyle \bigotimes_{i=1}^mV_i$, where $P_i$ is an orthogonal projection on $V_i$.
\end{itemize}

\subsection{Unitary representations of $\mathbb{Z}_2 * \cdots * \mathbb{Z}_2$}
\label{subsec:unitaryrepresentation}

For a given Hilbert space $V$ and $m \in \mathbb{N}$,
there exists a one-to-one correspondence between $m$ orthogonal projections on $V$ and unitary representations of 
$\Gamma_m$ on $V$, as described below. For basic notions and properties of unitary representations of groups,
see, e.g., \cite{MSugiura1990}.

\begin{Proposition}
\label{fact:correspondance}
Let $P_1,\dots,P_m$ be $m$ orthogonal projections on $V$,
and define the representation $\tau=\tau_{P_1,\dots,P_m}$ of $\Gamma_m$ by
$\displaystyle\tau(g_i)=2P_i-\operatorname{id}_{V}\quad(i=1,\dots,m)$.
Then $(V,\tau)$ is a unitary representation of $\Gamma_m$.

Conversely, for a unitary representation $\tau$ of $\Gamma_m$ whose representation space is $V$,
define $P_1,\dots,P_m$ by
$\displaystyle P_i=\dfrac{\tau(g_i)+\operatorname{id}_{V}}{2}\quad(i=1,\dots,m)$.
Then $P_1,\dots,P_m$ are orthogonal projections on $V$.
\end{Proposition}

The following Proposition~\ref{fact:infinitedihedralgroupfinitedimensionalrepresentationdecomposition} is well known concerning the irreducible decomposition of finite dimensional unitary representations of 
$D_\infty$. In quantum information theory,
Proposition~\ref{fact:infinitedihedralgroupfinitedimensionalrepresentationdecomposition}
is referred to as Jordan's lemma.

\begin{Proposition}[Jordan's lemma]
\label{fact:infinitedihedralgroupfinitedimensionalrepresentationdecomposition}
Let
\begin{enumerate}
\item[(S1)] $V$ be a separable Hilbert space,
\item[(S2)] $P$ and $Q$ be a pair of orthogonal projections on $V$, and
\item[(S3)] $A$ and $B$ be defined by $A=2P-\operatorname{id}_V$ and $B=2Q-\operatorname{id}_V$,
\end{enumerate}
then
\begin{enumerate}
\item[(i)] If $\dim V <\infty$, there exists $M \in \mathbb{N}$ and an orthogonal direct sum decomposition
$\displaystyle V=\bigoplus_{i=1}^M W_i$ such that $W_i$ is an irreducible invariant subspace under $\tau_{P,Q}$ for any $i \in \{1,\dots,M\}$.
\item[(ii)] If $\tau_{P,Q}$ is an irreducible unitary representation of $D_\infty$, then $\dim V \le 2$.
\item[(iii)] If $\tau_{P,Q}$ is an irreducible unitary representation of $D_\infty$ and $\dim V =2$,
then there exists $x \in (0,1)$ such that with respect to a suitable orthonormal basis
\begin{align*}
2P-E_2=
\begin{pmatrix}
2x - 1 & 2\sqrt{x(1-x)}\\
2\sqrt{x(1-x)} & -(2x-1)
\end{pmatrix},
2Q-E_2=
\begin{pmatrix}
1 & 0\\
0 & -1
\end{pmatrix}.
\end{align*}
\end{enumerate}
\end{Proposition}
\begin{proof}[Proof of (i)]
This follows from the general theory of finite dimensional unitary representations of groups; see, e.g., \cite{MSugiura1990}.
\end{proof}
\begin{proof}[Proof of (ii)]
This was proven by Behncke~\cite{HBehncke1971}.
\end{proof}
\begin{proof}[Proof of (iii)]
This follows from Halmos' theorem~\cite{PRHalmos1969}.
\end{proof}

The following result
is a consequence of Proposition~\ref{fact:infinitedihedralgroupfinitedimensionalrepresentationdecomposition}
and
elementary calculations.

\begin{Proposition}
\label{fact:infinitedihedralgroupfinitedimensionalrepresentation}
Let
\begin{enumerate}
\item[(S1)] $V$ be a finite dimensional Hilbert space,
\item[(S2)] $P$ and $Q$ be a pair of orthogonal projections on $V$, and
\item[(S3)] $A,B$ be defined by $A=2P-\operatorname{id}_V$ and $B=2Q-\operatorname{id}_V$,
\end{enumerate}
then
\begin{enumerate}
\item[(i)] If $\dim V = 1$, then we have
\begin{equation*}
\sigma(A+B) \subset \{-2,0,2\}, \sigma([A,B])=\{0\}.
\end{equation*}
\item[(ii)] If $\dim V=2$ and $\tau_{P,Q}$ is an irreducible unitary representation of $D_\infty$, then there exists $x \in (0,1)$ such that
\begin{align*}
\sigma(A+B)&=\{-2\sqrt{x}, 2\sqrt{x}\},\sigma(A-B)=\{-2\sqrt{1-x}, 2\sqrt{1-x}\},\\
\sigma([A,B])&=\{-4\sqrt{x(1 - x)}i, 4\sqrt{x(1 - x)}i\}.
\end{align*}
Define $u(x), v(x)$ by
\begin{align*}
u(x)&=
\dfrac{1}{\sqrt{\dfrac{1-x}{(1+\sqrt{x})^2}+1}}
\begin{pmatrix}
-\dfrac{\sqrt{1-x}}{1+\sqrt{x}}\\
1
\end{pmatrix},\\
v(x)&=
\dfrac{1}{\sqrt{\dfrac{1-x}{(1-\sqrt{x})^2}+1}}
\begin{pmatrix}
\dfrac{\sqrt{1-x}}{1-\sqrt{x}}\\
1
\end{pmatrix}.
\end{align*}
Then $u(x), v(x)$ are unit eigenvectors of $A+B$,
corresponding to\\ $-2\sqrt{x},2\sqrt{x}$, respectively.
\item[(iii)]
For any orthogonal projections $P,Q$ in a two dimensional
Hilbert space $V$
such that $\tau_{P,Q}$ is an irreducible unitary representation of $D_\infty$,
\begin{equation*}
\rho([A,B])=|([A,B](\alpha u(x)+\beta v(x)),\alpha u(x)+\beta v(x))|,
\end{equation*}
for some $\alpha, \beta \in \mathbb{C}\text{ such that }|\alpha|^2+|\beta|^2=1$.
\item[(iv)] Let $\lambda$ be an eigenvalue of $A+B$ such that $\lambda^2$ is closest to $2$. Then
\begin{align*}
\sigma([A,B])&=b(\lambda).
\end{align*}
\end{enumerate}
\end{Proposition}

\subsection{Bell-CHSH operator}
\label{subsec:chshoperatorintroduction}

\subsubsection{Spectral radius of Bell-CHSH operator}

We define the Bell-CHSH operator and briefly summarize various results on it.
For two separable Hilbert spaces $V_1$ and $V_2$, and
a pair of orthogonal projections $P_{i,1}$ and $P_{i,2}$ on $V_i$ for $i=1,2$,
define $A_j=2P_{1,j}-\operatorname{id}_{V_1}$ and $B_j=2P_{2,j}-\operatorname{id}_{V_2}$ for $j=1,2$.
The Bell-CHSH operator $\mathcal{B}(P_{1,1},P_{1,2},P_{2,1},P_{2,2})$ on $V_1 \otimes V_2$ is defined as follows:
\begin{equation*}
\mathcal{B}(P_{1,1},P_{1,2},P_{2,1},P_{2,2})=(A_{1}+A_{2}) \otimes B_{1} + (A_{1}-A_{2}) \otimes B_{2}.
\end{equation*}
We denote $\mathcal{B}(P_{1,1},P_{1,2},P_{2,1},P_{2,2})$
by $\mathcal{B}$ whenever no confusion arises.

The spectral radius of $\mathcal{B}(P_{1,1},P_{1,2},P_{2,1},P_{2,2})$ is given by the following formula as established in \cite{LAKhalfin1985, LJLandau1987},
\begin{equation}
\label{eq:khalfintsirelsonlandau}
\rho(\mathcal{B}(P_{1,1},P_{1,2},P_{2,1},P_{2,2}))=\sqrt{4+\rho([A_1,A_2])\rho([B_1,B_2])}.
\end{equation}

This formula shows that the problem of estimating $\rho(\mathcal{B}(P_{1,1},P_{1,2},P_{2,1},P_{2,2}))$ is equivalent to that of estimating $\rho([A_1,A_2])$ and $\rho([B_1,B_2])$.
Remark that
\begin{equation*}
[A_1,A_2]=4[P_{1,1},P_{1,2}], [B_1,B_2]=4[P_{2,1},P_{2,2}].
\end{equation*}

\subsubsection{Tsirelson's inequality}

Tsirelson's inequality is stated as Theorem~\ref{theorem:tsirelsoninequality}.

\begin{Theorem}[Tsirelson's inequality \cite{BSTsirelson1980}]
\label{theorem:tsirelsoninequality}
Let
\begin{enumerate}
\item[(S1)] $V_1$ and $V_2$ be separable Hilbert spaces,
\item[(S2)] $P_{i,1}$ and $P_{i,2}$ be orthogonal projections on $V_i$ for $i \in \{1,2\}$,
\end{enumerate}
then
\begin{equation}
\label{eq:tsirelsoninequality}
\rho\left(\mathcal{B}\left(P_{1,1},P_{1,2},P_{2,1},P_{2,2}\right)\right) \le 2\sqrt{2}.
\end{equation}
\end{Theorem}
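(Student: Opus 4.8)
The plan is to combine the Khalfin--Landau--Tsirelson formula (\ref{eq:khalfintsirelsonlandau}) with the finite dimensional approximation of Theorem~\ref{theorem:oneshifterform} and Jordan's lemma. By (\ref{eq:khalfintsirelsonlandau}),
\[
\rho(\mathcal{B}(P_{1,1},P_{1,2},P_{2,1},P_{2,2}))=\sqrt{4+\rho([A_1,A_2])\,\rho([B_1,B_2])},
\]
and since $[A_1,A_2]=[P_{1,1},P_{1,2}]$, $[B_1,B_2]=[P_{2,1},P_{2,2}]$, it suffices to prove the single claim: for any two orthogonal projections $P,Q$ on a separable Hilbert space $V$, with $A=2P-\operatorname{id}_V$, $B=2Q-\operatorname{id}_V$, one has $\rho([A,B])\le2$. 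Granting this, the right-hand side above is at most $\sqrt{4+2\cdot2}=2\sqrt2$.

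To prove $\rho([A,B])\le2$, I first settle the finite dimensional case. If $\dim V<\infty$, Jordan's lemma (Fact~\ref{fact:infinitedihedralgroupfinitedimensionalrepresentationdecomposition}(i)) provides an orthogonal decomposition $V=\bigoplus_{i}W_i$ into invariant subspaces of dimension at most two; each $W_i$ is invariant under $A$ and $B$, hence under $[A,B]$, and by Fact~\ref{fact:infinitedihedralgroupfinitedimensionalrepresentation}(i),(ii) the spectrum of $[A,B]|_{W_i}$ is contained in $\{0\}\cup\{\pm4\sqrt{x(1-x)}\,i:x\in(0,1)\}$, whose elements have modulus at most $4\cdot\tfrac12=2$ by the arithmetic--geometric mean inequality. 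Since $[A,B]$ is skew-adjoint, hence normal, and the numerical radius of a direct sum equals the supremum of the numerical radii of its summands, $\rho([A,B])=\sup_i\rho([A,B]|_{W_i})\le2$.

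For general $V$, apply Theorem~\ref{theorem:oneshifterform} to $(P,Q)$, obtaining finite dimensional subspaces $V_n$ with orthogonal projections $\pi_n$ onto them, and orthogonal projections $P_n,Q_n$ on $V_n$, with $\|v-\pi_n v\|$, $\|Pv-P_n\pi_n v\|$, $\|Qv-Q_n\pi_n v\|\to0$ for every $v\in V$. Put $A_n=2P_n-\operatorname{id}_{V_n}$, $B_n=2Q_n-\operatorname{id}_{V_n}$; these are unitary, so $\|A_n\|=\|B_n\|=1$, and from the displayed convergences one gets $A_n\pi_n v\to Av$ and $B_n\pi_n v\to Bv$ for every $v$. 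Iterating this — writing $A_nB_n\pi_n u-ABu=A_n(B_n\pi_n u-\pi_n(Bu))+(A_n\pi_n(Bu)-A(Bu))$ and bounding the first summand by $\|B_n\pi_n u-\pi_n(Bu)\|$ via $\|A_n\|=1$ — yields $[A_n,B_n]\pi_n u\to[A,B]u$ in norm for every $u\in V$. Hence, for a unit vector $u\in V$,
\[
\bigl|([A,B]u,u)\bigr|=\lim_{n\to\infty}\bigl|([A_n,B_n]\pi_n u,\pi_n u)\bigr|\le\limsup_{n\to\infty}\bigl(\|\pi_n u\|^2\,\rho([A_n,B_n])\bigr)\le2,
\]
the last step using the numerical-radius bound $|(Tw,w)|\le\|w\|^2\rho(T)$, the finite dimensional estimate $\rho([A_n,B_n])\le2$ just proved, and $\|\pi_n u\|\to1$. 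Taking the supremum over unit $u\in V$ gives $\rho([A,B])\le2$, and with it (\ref{eq:tsirelsoninequality}).

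The one point that needs genuine care is the convergence $[A_n,B_n]\pi_n u\to[A,B]u$: the approximation $A_n\pi_n\approx A$ is valid only on the range of $\pi_n$, so in the composition $A_nB_n$ one must insert $\pi_n$ between the two factors and absorb the resulting discrepancy using the uniform bound $\|A_n\|=1$; this is the only place where more than a one-line estimate is involved, everything else being either the quoted Theorem~\ref{theorem:oneshifterform}, Jordan's lemma, or the elementary inequality $4\sqrt{x(1-x)}\le2$. I would also remark, to make good on the comparison with other proofs announced in the introduction, that this argument uses only the existence statement of Theorem~\ref{theorem:oneshifterform} and not the explicit tridiagonal form, and that it mirrors Tsirelson's Clifford-algebra reduction to dimension two while replacing that device by the approximation-plus-Jordan route.
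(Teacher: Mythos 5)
Your proof is correct and follows the same strategy the paper announces: use the Khalfin--Landau--Tsirelson formula~(\ref{eq:khalfintsirelsonlandau}) to reduce to the single-space commutator bound $\rho([A,B])\le 2$, establish that bound in finite dimensions via Jordan's lemma (Fact~\ref{fact:infinitedihedralgroupfinitedimensionalrepresentationdecomposition}) and the $2\times 2$ spectral data (Fact~\ref{fact:infinitedihedralgroupfinitedimensionalrepresentation}), and then pass to infinite dimensions through the approximation of Theorem~\ref{theorem:oneshifterform}. The one step that genuinely needs care — inserting $\pi_n$ inside the product and absorbing the error with $\|A_n\|=\|B_n\|=1$ to get $[A_n,B_n]\pi_n u\to[A,B]u$ — is handled correctly, and the rest is the elementary bound $4\sqrt{x(1-x)}\le 2$ together with the fact that the numerical radius of a direct sum of normal operators is the supremum of the summands' numerical radii.
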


\subsection{Tridiagonal matrices}
\label{subsec:tridiagonalmatrixintroduction}

\ifthenelse{\value{arxiv} = 1}{
\subsubsection{Sufficient conditions for distinct eigenvalues}
}

We apply the following well-known fact to estimate
 the spectral radius of $[A,B]$; see, e.g., \cite{TSChihara1978} for the details.

\begin{Proposition}
\label{fact:tridiagonaleigenvaluesdistinctcondition}
Let $C$ be a symmetric tridiagonal matrix defined by
\begin{equation*}
C=
\begin{pmatrix}
c_{1,1} & c_{1,2} &           0 &           0 & \dots &  0 & 0 & 0\\
c_{2,1} & c_{2,2} & c_{2,3} &           0 & \dots &  0 & 0 & 0\\
          0 & c_{3,2} & c_{3,3} & c_{3,4} & \dots &  0 & 0 & 0\\
    \dots &     \dots &      \dots &    \dots & \dots & \dots & \dots & \dots\\
          0 &            0 &           0 &           0 & \dots & c_{n-1,n-2} & c_{n-1,n-1} & c_{n-1,n}\\
          0 &            0 &           0 &           0 & \dots & 0 & c_{n,n-1} & c_{n,n}
\end{pmatrix}
\end{equation*}
such that
\begin{equation*}
c_{i,i+1} = c_{i+1,i} \ne 0\quad (i=1,2,\dots,n-1).
\end{equation*}
Then the eigenvalues of $C$ are distinct.
\end{Proposition}

\ifthenelse{\value{arxiv} = 1}{
\subsubsection{Explicit eigenvalues and eigenvectors of Toeplitz matrices}
\label{subsubsec:toeplitz}

Yueh and Cheng \cite{WCYueh2008} provides a method for constructing eigenpairs
 of the following $n \times n$ tridiagonal matrix:
\begin{equation*}
X
=
\begin{pmatrix}
    b + \gamma &  c & 0 & \dots & 0 & \alpha\\
                     a &   b & c & \dots & 0 & 0\\
                     0 &  a &  b & \dots & 0 & 0\\
                    \dots & \dots & \dots & \dots & \dots & \dots\\
                     0 & 0 & 0 & \dots & b & c\\
               \beta & 0 & 0 & \dots & a & b + \delta\\
\end{pmatrix}
\end{equation*}
where $n \in \mathbb{N}$ and $a,b,\gamma, \delta, \alpha, \beta \in \mathbb{C}$.
We focus on the case when $a=c \in \mathbb{R}$ and $b \in \mathbb{R}$.
Define $\mu$ by $\mu=\sqrt{\dfrac{a}{c}}$.
Assume that we find $\phi \in [0,2\pi)$ such that
 $\sin\phi \ne 0$ and $\phi$ satisfies the following equation:
\begin{align*}
&\mu^n\left(ac\sin\left(\left(n+1\right)\phi\right)+\left(\gamma \delta - \alpha \beta\right)\sin\left(\left(n-1\right)\phi\right)-c\mu\left(\gamma+\delta\right)\sin\left(n\phi\right)\right)\\
&-\left(c \alpha \mu^{2n}+\alpha\beta\right)\sin\phi=0.
\end{align*}
Then according to Theorem~3.1 of \cite{WCYueh2008},
\begin{equation*}
\lambda=b + 2c\mu \cos\phi
\end{equation*}
is an eigenvalue $\lambda$ of $X$.
Moreover, from (3.3) of \cite{WCYueh2008},
for any eigenvector $u$ of $X$ corresponding to $\lambda$,
\begin{align}
\label{eq:yueh33}
u_j=&\dfrac{2i}{\sqrt{\omega}}\left\{cu_1\mu^j \sin(j\phi)-(\alpha u_n + \gamma u_1)\mu^{j-1}\sin((j-1)\phi)\right\}\notag\\
&-\dfrac{2i}{\sqrt{\omega}}H_j^n(\beta u_1 + \delta u_n)\mu^{j-n}\sin((j-n)\phi)\quad(j=1,2,\dots,n),
\end{align}
where
\begin{align*}
H_j^n&=
\left\{
\begin{array}{ll}
0 & \quad j < n, \\
1 & \quad j \ge n.
\end{array}
\right.
\end{align*}
}

\section{Finite dimensional approximation of orthogonal projections}
\label{sec:finitedimensionalapproximation}

We define finite dimensional approximations of a finite family of orthogonal projections,
and present their basic properties and applications.

In Section~\ref{subsec:finitedimensionalapproximationbasicproperties},
we define finite dimensional approximations of a finite family of orthogonal projections, and present their basic properties.
In particular, we prove that for any finite family of orthogonal projections
on a separable Hilbert space $V$ there exists a finite dimensional approximation.

In Section~\ref{subsec:reduction},
we present an application of finite dimensional approximations of orthogonal projections. We give a method to reduce inequalities involving orthogonal projections and continuous functions on a Hilbert space to the finite dimensional case.

In Section~\ref{subsec:tsirelson},
we give a new proof of Tsirelson's inequality via this reduction method.

\subsection{Basic properties of finite dimensional approximation}
\label{subsec:finitedimensionalapproximationbasicproperties}

The definition of finite dimensional approximation of orthogonal projections is given below.

\begin{Definition}
\label{def:finitedimensional_approximation}
Let
\begin{enumerate}
\item[(S1)] $V$ be a separable Hilbert space,
\item[(S2)] $P_1,\dots,P_m$ be a finite family of orthogonal projections on $V$.
\end{enumerate}
We say that a sequence $\left\{\left(V_j,\left\{P_{j,1},\dots,P_{j,m}\right\}\right)\right\}_{j=1}^\infty$
is a finite dimensional approximation of the pair $\left(V,\left\{P_1,\dots,P_m\right\}\right)$ if and only if the following conditions hold:
\begin{enumerate}
\item[(i)] For any $j \in \mathbb{N}$,
$V_j$ is a finite dimensional subspace of $V$ and
$\{P_{j,k}\}_{k=1}^m$ is a finite family of orthogonal projections on $V_j$.
\item[(ii)] For any $v \in V$, it follows that
\begin{equation*}
\lim_{M \to \infty}\max \{\|v-\pi_M v\|,\|P_1 v-P_{M,1} \pi_M v\|,\dots,\|P_m v-P_{M,m} \pi_M v\|\}=0,
\end{equation*}
where $\pi_M$ is the orthogonal projection from $V$ onto $V_M$.
\end{enumerate}
\end{Definition}

We present two equivalent characterizations of finite dimensional approximation of orthogonal projections in separable Hilbert spaces.

\begin{Proposition}
Assume the setups $(S1), (S2)$ in {\rm{Definition~\ref{def:finitedimensional_approximation}}},
and let
\begin{enumerate}
\item[(S3)] $\{(V_j,\{P_{j,1},\dots,P_{j,m}\})\}_{j=1}^\infty$ be a sequence such that $V_j$
is a finite dimensional subspace of $V$
and $P_{j,1},\dots,P_{j,m}$ are orthogonal projections on $V_j$ for any $j \in \mathbb{N}$,
\end{enumerate}
then the following conditions are equivalent:
\begin{enumerate}
\item[(i)] The sequence $\{(V_j,\{P_{j,1},\dots,P_{j,m}\})\}_{j=1}^\infty$
is a finite dimensional approximation of $(V,\{P_1,\dots,P_m\})$.
\item[(ii)] For some dense countable subset $D=\{v_i\}_{i=1}^\infty$ of $V$ and any $i \in \mathbb{N}$,
\begin{equation*}
\lim_{M \to \infty}\max\{\|v_i-\pi_M v_i\|,\|P_1 v_i-P_{M,1} \pi_M v_i\|,\dots,\|P_m v_i-P_{M,m} \pi_M v_i\|\}=0,
\end{equation*}
where $\pi_M$ is the orthogonal projection from $V$ onto $V_M$.
\item[(iii)] For any dense countable subset $D=\{v_i\}_{i=1}^\infty$ of $V$ and any $i \in \mathbb{N}$,
\begin{equation*}
\lim_{M \to \infty}\max\{\|v_i-\pi_M v_i\|,\|P_1 v_i-P_{M,1} \pi_M v_i\|,\dots,\|P_m v_i-P_{M,m} \pi_M v_i\|\}=0,
\end{equation*}
where $\pi_M$ is the orthogonal projection from $V$ onto $V_M$.
\end{enumerate}
\end{Proposition}
\begin{proof}
It is clear that (i) $\implies$ (iii) $\implies$ (ii).
We prove that (ii) $\implies$ (i).
Let $v \in V$ and $\varepsilon > 0$.
There exist $i \in \mathbb{N}$ and $n_0 \in \mathbb{N}$ such that
for any $M \in [n_0,\infty) \cap \mathbb{N}$
\begin{align*}
\max\{\|v-v_i\|, \|v_i-\pi_M v_i\|,\|P_1 v_i-P_{M,1} \pi_M v_i\|,\dots,\|P_m v_i-P_{M,m} \pi_M v_i\|\}<\dfrac{\varepsilon}{4}.
\end{align*}
Let $M \in [n_0,\infty) \cap \mathbb{N}$ and $k \in \{1,\dots,m\}$.
Since the operator norm of every orthogonal projection is at most $1$,
we have
\begin{align*}
&\|v - \pi_Mv\|=\|v_i-\pi_Mv_i+(\mathrm{id}_V - \pi_M)(v-v_i)\|<\varepsilon,\\
&\|P_k v-P_{M,k} \pi_M v\|=\|P_k v_i-P_{M,k}\pi_Mv_i+(P_k-P_{M,k}\pi_M)(v-v_i)\|<\varepsilon.
\end{align*}
\end{proof}

For any finite family of orthogonal projections
on a separable Hilbert space $V$, there exists a finite dimensional approximation.
This theorem will be applied in Section~\ref{subsec:reduction}.

\begin{Theorem}
\label{theorem:finite_dimensional_approximation_existence}
Assume the setups $(S1), (S2)$ in {\rm{Definition~\ref{def:finitedimensional_approximation}}}.
Then there exists a finite dimensional approximation of $(V,\{P_1,\dots,P_m\})$.
\end{Theorem}
\begin{proof}
Let $D=\{v_i\}_{i=1}^\infty$ be a dense countable subset of $V$.
Let $n \in \mathbb{N}$.
For $i \in \{1,\dots,m\}$, define $W_i$ by
$W_i=\langle P_i v_1,\dots,P_i v_n \rangle$
and let $Q_i$ be the orthogonal projection onto $W_i$.
Let $i \in \{1,\dots,m\}$ and $j \in \{1,\dots,n\}$.
Since
\begin{equation*}
v_j=P_iv_j+(\mathrm{id}_V-P_i)v_j, P_iv_j \in W_i, (\mathrm{id}_V-P_i)v_j \perp W_i,
\end{equation*}
we have $P_iv_j=Q_iv_j$.

Define $W$ by
$\displaystyle W=\sum_{i=1}^m W_i$.
It follows that $W$ is a finite dimensional invariant subspace
under $Q_1,\dots,Q_m$.
Therefore, there exists
an orthonormal basis $\{x_k\}_{k=1}^{\dim W}$
of $W$ and an orthonormal basis $\{y_l\}_{l=1}^{\infty}$
of $W^{\perp}$.
Since $V=W \oplus W^{\perp}$,
there exists a sufficiently large $L \in \mathbb{N}$ such that
$\|v_j-\pi_{L}v_j\| < \dfrac{1}{n}\quad(j=1,\dots,n)$,
where $\pi_L$ is the orthogonal projection
onto $\langle \{x_k\}_{k=1}^{\dim W} \cup \{y_l\}_{l=1}^{L} \rangle$.
Fix such $L$ and define $L(n)=L$.
Let $V_L$ be $\langle \{x_k\}_{k=1}^{\dim W} \cup \{y_l\}_{l=1}^{L} \rangle$.
Since $Q_i | W^{\perp}=0$ for any $i \in \{1,\dots,m\}$,
 $V_L$ is invariant under $Q_1,\dots,Q_m$.

Define $P_{L,1},\dots,P_{L,m}$ by
$P_{L,i}=Q_i|V_L\quad(i=1,\dots,m)$.
Then $V_L$ is an invariant subspace under $P_{L,1},\dots,P_{L,m}$.
Hence $P_{L,1},\dots,P_{L,m}$ are orthogonal projections
on $V_{L}$.
Moreover, for any $i \in \{1,\dots,m\}$, $j \in \{1,\dots,n\}$
\begin{equation*}
\|P_i v_j-P_{L,i}\pi_{L}v_j\|=\|Q_{i} (v_j-\pi_{L}v_j)\| < \dfrac{1}{n}.
\end{equation*}

We conclude that $\{(V_{L(n)}, \{P_{L(n),1},\dots,P_{L(n),m}\})\}_{n=1}^\infty$ is
a finite dimensional approximation of $(V,\{P_1,\dots,P_m\})$.
\end{proof}

Every finite dimensional approximation of orthogonal projections
also approximates every polynomial of the projections, as follows.
This proposition will also be applied in Section~\ref{subsec:reduction}.

\begin{Proposition}
\label{proposition:finite_dimensional_approximation_polynomial}
Assume the setups $(S1), (S2)$ in {\rm{Definition~\ref{def:finitedimensional_approximation}}}, and let
\begin{enumerate}
\item[(S3)] $\{(V_j,\{P_{j,1},\dots,P_{j,m}\})\}_{j=1}^\infty$ be a finite dimensional approximation  of \\$(V,\{P_1, \dots, P_m\})$, and
\item[(S4)] $f(x_1,\dots,x_m) \in \mathbb{C}\langle x_1,\dots,x_m \rangle$.
\end{enumerate}
Then, for any $v \in V$, we have
\begin{equation*}
\lim_{M \to \infty}\|f(P_1,\dots,P_m) v-f(P_{M,1},\dots,P_{M,m}) \pi_M v\|=0.
\end{equation*}
\end{Proposition}
\begin{proof}
Let $M=\deg(f)$.
Define the complex numbers $a_{i_1,\dots,i_k}$ \\$(k \in \{0,\dots,\deg(f)\}, i_1,\dots,i_k \in \{1,\dots,m\})$ by
\begin{equation*}
f(x_1,\dots,x_m)=\sum_{\substack{k \in \{0,\dots,\deg(f)\}, \\ i_1,\dots,i_k \in \{1,\dots,m\}}}a_{i_1,\dots,i_k}x_{i_1}\cdots x_{i_k}.
\end{equation*}
Define $a, B$ by
\begin{align*}
a&=\max\{|a_{i_1,\dots,i_k}|\ | k \in \{0,\dots,\deg(f)\},
i_1,\dots,i_k \in \{1,\dots,m\}\},\\
B&=(a+1)(m+1)^{M+1}.
\end{align*}
Let $v \in V$ and $\varepsilon > 0$.
Then
\begin{equation*}
\left\{P_{i_1}\cdots P_{i_k}v\ \middle|k \in \{0,\dots,M\}, i_1,\dots,i_k \in \{1,\dots,m\}\right\}
\end{equation*}
is a finite family of vectors.
Hence, for any sufficiently large $n \in \mathbb{N}$ and any $k \in \{0,\dots,M\}$ and any $i_1,\dots,i_k \in \{1,\dots,m\}$,
we have
\begin{align*}
\left\|P_{i_1}\cdots P_{i_k}v-P_{n,i_1}\cdots P_{n,i_k}\pi_{n}v\right\|<\dfrac{1}{B}\varepsilon.
\end{align*}
Therefore, for any sufficiently large $n \in \mathbb{N}$
\begin{equation*}
\left\|f(P_1,\dots,P_m) v-f(P_{n,1},\dots,P_{n,m}) \pi_n v\right\|<\varepsilon.
\end{equation*}
\end{proof}

The following proposition presents a method for constructing finite dimensional approximations of orthogonal projections $P_1,\dots,P_m$ on a separable Hilbert space $V$ via a decomposition into invariant subspaces under these projections.
This proposition will be applied in Section~\ref{sec:two}.

\begin{Proposition}
\label{proposition:finite_dimensional_approximation_directsum}
Assume the setups $(S1), (S2)$ in {\rm{Definition~\ref{def:finitedimensional_approximation}}},
and let
\begin{enumerate}
\item[(S3)] $\displaystyle V=\bigoplus_{i \in I} W_i$ be an orthogonal direct sum decomposition, where $I$ is an at most countable set,
and $W_i$ is an invariant subspace under $P_1,\dots,P_m$ for any $i \in I$,
\item[(A1)] $\{(W_{i,n},\{P_{i,n,1},\dots,P_{i,n,m}\})\}_{n=1}^\infty$
be a finite dimensional approximation of $(W_i, \{P_1|W_i,\dots,P_m|W_i\})$ for $i \in I$, and
 \item[(S4)] $\{I_i\}_{i=1}^\infty$ be a family of finite subsets of $I$ such that $I_i \subset I_{i+1}$ for any $i \in \mathbb{N}$ and $\displaystyle \bigcup_{i=1}^\infty I_i=I$,
\end{enumerate}
then
\begin{equation*}
\left\{\left(\bigoplus_{i \in I_n} W_{i,n}, \left\{\bigoplus_{i \in I_n} P_{i,n,1},\dots,\bigoplus_{i \in I_n} P_{i,n,m}\right\}\right)\right\}_{n=1}^\infty
\end{equation*}
is a finite dimensional approximation of $(V,\{P_1,\dots,P_m\})$.
\end{Proposition}
\begin{proof}
Let $v \in V$ and $\varepsilon > 0$.
Then there exists $n_1 \in \mathbb{N}$ such that
for any $n \in [n_1,\infty) \cap \mathbb{N}$
\begin{equation*}
\left\|v-\sum_{j \in I_{n}}w_j\right\|<\dfrac{\varepsilon}{8},
\end{equation*}
where $w_j=\pi_{W_j}v$ and $\pi_{W_j}$ is the orthogonal projection onto $W_j$ for $j \in I_{n}$.
For $n \in \mathbb{N}$, let $\pi_n$ be $\displaystyle \bigoplus_{j \in I_n} \pi_{W_j}$.

Since $I_{n_1}$ is a finite set, there exists $n_2 \in \mathbb{N}$ such that
\begin{equation*}
\left\|P_{j,k}w_j-P_{j,n,k}\pi_{j,n} w_j\right\|<\dfrac{\varepsilon}{4\#I_{n_1}}
\end{equation*}
for any $j \in I_{n_1}$, $k \in \{1,\dots,m\}$, and $n \in [n_2,\infty) \cap \mathbb{N}$.
Here $\pi_{j,n}$ is the orthogonal projection onto $W_{j,n}$.
For $n,n' \in \mathbb{N}$, let $\pi_{n,n'}$ be $\displaystyle \bigoplus_{j \in I_n} \pi_{W_{j,n'}}$.

Let $k \in \{1,\dots,m\}$ and $n \in [\max\{n_1,n_2\},\infty) \cap \mathbb{N}$.
From the definition of $n_1$, we have
$
\|v-\pi_n v\|<\dfrac{\varepsilon}{8}
$. It follows that
$
\|\pi_n v - \pi_{n_1} v\|<\dfrac{\varepsilon}{4}
$.
From the definition of $n_2$, we have
$
\displaystyle\left\|\bigoplus_{j \in I_{n_1}}P_{j,k}\pi_{n_1}v-\bigoplus_{j \in I_{n_1}}P_{j,n,k}\pi_{n_1,n} v\right\|<\dfrac{\varepsilon}{4}
$.
We conclude that
\begin{equation*}
\left\|P_{k}v-\bigoplus_{j \in I_{n}}P_{j,n,k} \pi_{n,n} v  \right\|<\varepsilon.
\end{equation*}
\end{proof}

It is possible to construct a finite dimensional approximation of a certain type of orthogonal projections
on Hilbert tensor products, as follows.
This proposition will be applied in Section~\ref{subsec:reduction}.

\begin{Proposition}
\label{proposition:finite_dimensional_approximation_polynomial_tensor}
Let
\begin{enumerate}
\item[(S1)] $V_1,\dots,V_m$ be a finite family of separable Hilbert spaces,
\item[(S2)] $P_{i,1}, \dots, P_{i,n_i}$ be a finite family of orthogonal projections  on $V_i$ for $i \in \{1,\dots,m\}$, and
\item[(S3)] $\{(V_{i,n},\{P_{i,n,1}, \dots, P_{i,n,n_i}\})\}_{n=1}^\infty$ be a finite dimensional approximation  of $(V_i,\{P_{i,1}, \dots, P_{i,n_i}\})$ for $i \in \{1,\dots,m\}$,
\end{enumerate}
then
\begin{equation*}
\left\{\left(\bigotimes_{l=1}^m V_{l,n},\left\{P_{i,n,k_i} \middle| k_i \in [1,n_i] \cap \mathbb{N}\quad (i=1,\dots,m)\right\}\right)\right\}_{n=1}^\infty
\end{equation*}
is a finite dimensional approximation of
\begin{equation*}
\left(\bigotimes_{l=1}^m V_{l},\left\{P_{i,k_i} \middle| k_i \in [1,n_i] \cap \mathbb{N}\quad (i=1,\dots,m)\right\}\right).
\end{equation*}
\end{Proposition}
\begin{proof}
Let $\displaystyle \psi \in \bigotimes_{i=1}^m V_i$ and $\varepsilon>0$.
From the definition of $\displaystyle \bigotimes_{i=1}^m V_i$, the linear span
$\langle \{v_1 \otimes \dots \otimes v_m|v_i \in V_i\quad(i=1,\dots,m)\} \rangle$
is dense in $\displaystyle \bigotimes_{i=1}^m V_i$.
Therefore, there exist $M \in \mathbb{N}$ and
$v_{i,1},\dots,v_{i,M} \in V_i$ $(i=1,\dots,m)$
such that
$\displaystyle \left\|\psi - \sum_{k=1}^M v_{1,k} \otimes \dots \otimes v_{m,k}\right\|<\dfrac{\varepsilon}{3}$.
Define $L$ by 
\begin{equation*}
L=\max\left\{\|v_{i,k}\|\ |i=1,\dots,m,k=1,\dots,M\right\}+1.
\end{equation*}
There exists $n_0 \in \mathbb{N}$ such that
for any $n \in [n_0,\infty) \cap \mathbb{N}$, $i \in \{1,\dots,m\}$, $j \in \{1,\dots,n_i\}$, and $k \in \{1,\dots,M\}$, we have
\begin{equation*}
\left\|P_{i,j}v_{i,k}-P_{i,n,j}\pi_{i,n}v_{i,k}\right\|<\dfrac{\varepsilon}{3ML^m}.
\end{equation*}
Here, $\pi_{i,n}$ is the orthogonal projection onto $V_{i,n}$ for $i \in \{1,\dots,m\}$ and $n \in \mathbb{N}$.
Let $n \in [n_0,\infty) \cap \mathbb{N}$.
Then, we have
\begin{equation*}
\left\|P_{i,j_i}\sum_{k=1}^M\bigotimes_{l=1}^m v_{l,k} - P_{i,n,j_i}\bigotimes_{l=1}^m\pi_{l,n}\sum_{k=1}^M\bigotimes_{l=1}^m v_{l,k}\right\|
<\dfrac{\varepsilon}{3}
\end{equation*}
for any $i \in \{1,\dots,m\}$, $j_i \in \{1,\dots,n_i\}$.
Note that
$\displaystyle\bigotimes_{l=1}^m \pi_{l,n}$ is the orthogonal projection onto $\displaystyle\bigotimes_{l=1}^m V_{l,n}$.
We conclude that for any $i \in \{1,\dots,m\}$, $j_i \in \{1,\dots,n_i\}$,
\begin{equation*}
\left\|P_{i,j_i}\psi-P_{i,n,j_i}\bigotimes_{l=1}^m \pi_{l,n}\psi\right\|
<\varepsilon.
\end{equation*}
\end{proof}

For a polynomial $f(P_1,\dots,P_m)$ of orthogonal projections $P_1,\dots,P_m$
on a Hilbert space $V$,
we define the finite dimensional
approximate point spectrum of $f(P_1,\dots,P_m)$
corresponding to a finite dimensional approximation
$\{(V_n,\{P_{n,1},\dots,P_{n,m}\})\}_{n=1}^\infty$, as follows.
Such a spectrum consists of all complex numbers $\lambda$ satisfying the following conditions:
\begin{itemize}
\item $\lambda$ is approximated by eigenvalues $\lambda_n$ $(n \in \mathbb{N})$ of $f(P_{n,1},\dots,P_{n,m})$.
\item The actions of $P_1,\dots,P_m$ on an eigenvector corresponding to $\lambda_n$ are approximated by $P_{n,1},\dots,P_{n,m}$.
\end{itemize}
This notion plays a role in Section~\ref{sec:chsh}.

\begin{Definition}
\label{def:finitedimensionalapproximatespectrum}
Assume the setups $(S1), (S2)$ in \rm{Definition~\ref{def:finitedimensional_approximation}},
and let
\begin{enumerate}
\item[(S3)] $f \in \mathbb{C}\langle x_1,\dots,x_m \rangle$,
\item[(S4)] $\{(V_n,\{P_{n,1},\dots,P_{n,m}\})\}_{n=1}^\infty$ be a finite dimensional approximation
of $(V,\{P_1,\dots,P_m\})$.
\end{enumerate}
We say $\lambda \in \mathbb{C}$ is in the finite dimensional
approximate point spectrum of $f(P_1,\dots,P_m)$
corresponding to $\{(V_n,\{P_{n,1},\dots,P_{n,m}\})\}_{n=1}^\infty$
if there exists a sequence $\{\lambda_n\}_{n=1}^\infty$ in $\mathbb{C}$ and a sequence $\{u_n\}_{n=1}^\infty$ of unit
vectors in $V$ such that
\begin{enumerate}
\item[(i)] For any $n \in \mathbb{N}$, $\lambda_n$ is an eigenvalue of $f(P_{n,1},\dots,P_{n,m})$.
\item[(ii)] For any $n \in \mathbb{N}$, $u_n$ is an eigenvector of $f(P_{n,1},\dots,P_{n,m})$
corresponding to $\lambda_n$.
\item[(iii)] $\displaystyle \lim_{n\to \infty}\lambda_n=\lambda$.
\item[(iv)] For any $j \in \{1,\dots,m\}$,
$\displaystyle \lim_{n\to \infty}\|P_j u_n-P_{n,j} u_n\|=0$.
\end{enumerate}
\end{Definition}

We note that every complex number in the finite dimensional approximate point spectrum of $f(P_1,\dots,P_m)$ lies in the approximate point spectrum of $f(P_1,\dots,P_m)$.
For the definition of the approximate point spectrum,
see, e.g., \cite{KYoshida1980}.

\subsection{Reduction of inequalities to the finite dimensional case}
\label{subsec:reduction}

In various fields, including quantum mechanics,
one encounters the following problem:
for a Hilbert space $V$,
a non-commutative polynomial $f$ of orthogonal projections
$P_1,\dots,P_m$,
and a real-valued continuous function $F$ on $V$,
estimate the following quantity,
\begin{align*}
\sup_{\substack{\Psi \in V, \|\Psi\|=1}}F(f(P_{1},\dots,P_{m})\Psi),
\end{align*}
where $F$ represents the quantity to be estimated.

In quantum information theory,
one often considers problems where $V$ is a Hilbert tensor product $\displaystyle\bigotimes_{i=1}^m V_i$,
and orthogonal projections are
$\operatorname{id}_{V_1} \otimes \dots \otimes P_{i,j}\otimes \dots \otimes \operatorname{id}_{V_m}$ ($i \in \{1,\dots,m\}$, $j \in \{1,\dots,n\}$).

As an example, consider the problem of estimating the spectral radius of the Bell-CHSH operator.
In this problem, we have $m = 2$, and $f$ and $F$ are given as follows.

\begin{Example}
\label{example:tsirelson}
Let
\begin{enumerate}
\item[(S1)] $V_1$, $V_2$ be separable Hilbert spaces,
\item[(S2)] $P_{i,j}$ be an orthogonal projection on $V_i$ for $i,j \in \{1,2\}$,
\item[(S3)] the non-commutative polynomial $f$ be defined by
\begin{equation*}
f(x_{1,1},\dots,x_{2,2})=2(x_{1,1}+x_{1,2}-1)(2x_{2,1}-1)+2(x_{1,1}-x_{1,2})(2x_{2,2}-1), \text{ and}
\end{equation*}
\item[(S4)] the real-valued continuous function $F$ on $V_1 \otimes V_2$ be defined by
\begin{equation*}
F(\Psi)=\|\Psi\|\quad (\Psi \in V_1 \otimes V_2),
\end{equation*}
\end{enumerate}
then the following identity holds:
\begin{equation*}
\sup_{\Psi \in V_1 \otimes V_2, \|\Psi\|=1}F(f(P_{1,1},P_{1,2},P_{2,1},P_{2,2})\Psi)=\rho(\mathcal{B}(P_{1,1},P_{1,2},P_{2,1},P_{2,2})).
\end{equation*}
\end{Example}
\begin{proof}
Define $A_{1},A_{2},B_{1},B_{2}$ by
$A_{i}=2P_{1,i}-\operatorname{id}_{V_1}$,
$B_{i}=2P_{2,i}-\operatorname{id}_{V_2}$
 $(i=1,2)$.
Then
\begin{align*}
&\quad\mathcal{B}
=
(A_1+A_2) \otimes B_1
+
(A_1-A_2) \otimes B_2\\
&=
2(P_{1,1}+P_{1,2}-\operatorname{id}_{V_1}) \otimes (2P_{2,1}-\operatorname{id}_{V_2})
+
2(P_{1,1}-P_{1,2}) \otimes (2P_{2,2}-\operatorname{id}_{V_2})\\
&=f(P_{1,1},P_{1,2},P_{2,1},P_{2,2}).
\end{align*}
Since $\mathcal{B}$ is a self-adjoint operator,
its spectral radius coincides with its operator norm.
Therefore, we have
\begin{align*}
&\quad\rho(\mathcal{B})
=
\|\mathcal{B}\|=\sup_{\substack{\Psi \in V_1 \otimes V_2,\\\|\Psi\|=1}}\|\mathcal{B}\Psi\|
=\sup_{\substack{\Psi \in V_1 \otimes V_2,\\\|\Psi\|=1}}F(f(P_{1,1},P_{1,2},P_{2,1},P_{2,2})\Psi).
\end{align*}
\end{proof}

The following Theorem~\ref{theorem:finite_dimensional_reduction} allows us to reduce the above problem to the case where the target Hilbert spaces are finite dimensional.
As an application, we present a new proof of Tsirelson's inequality
in Section \ref{subsec:tsirelson}.
Another application is a security proof of device-independent quantum
key distribution against a class of attacks known as collective attacks, which will be presented in our forthcoming work~\cite{TTsurumaru2025}.

\begin{Theorem}
\label{theorem:finite_dimensional_reduction}
Assume the setups $(S1), (S2)$ in \rm{Definition~\ref{def:finitedimensional_approximation}},
and let
\begin{enumerate}
\item[(S3)] $F$ be a real-valued continuous function on $V$,
\item[(S4)] $f(x_{1},\dots,x_{m}) \in \mathbb{C}\langle x_1,\dots,x_m\rangle$, and
\item[(A1)] there exists a constant $C \in \mathbb{R}$ such that
for any finite dimensional subspace $V_{f}$ of $V$, any orthogonal projections $P_{f,1},\dots,P_{f,m}$ on $V_{f}$,
and any unit vector $\displaystyle \Psi_f \in V_f$, it follows that
\begin{equation*}
F\left(f\left(P_{f,1},\dots,P_{f,m}\right) \Psi_f \right) \le C,
\end{equation*}
\end{enumerate}
then
for any unit vector $\displaystyle \Psi \in \bigotimes_{i=1}^m V_{i}$, we have
\begin{equation*}
F\left(f\left(P_{1},\dots,P_{m}\right)\Psi\right) \le C.
\end{equation*}
\end{Theorem}
\begin{proof}
From Theorem~\ref{theorem:finite_dimensional_approximation_existence},
there exists a finite dimensional approximation $\{(V_{n},\{P_{n,1},\dots,P_{n,m}\})\}_{n=1}^\infty$
of $(V,\{P_{1},\dots,P_m\})$.
Let $\Psi$ be a unit vector in $V$.
From Proposition~\ref{proposition:finite_dimensional_approximation_polynomial}, we have
\begin{align*}
\lim_{n \to \infty}\left\|f\left(P_{1},\dots,P_{m}\right)\Psi
-
f\left(P_{n,1},\dots,P_{n,m}\right)\dfrac{\pi_n\Psi}{\|\pi_n\Psi\|}\right\|=0,
\end{align*}
where $\pi_n$ is the orthogonal projection onto $V_n$.
From assumption (A1), for any $n \in \mathbb{N}$, it follows that
\begin{equation*}
F\left(f\left(P_{n,1},\dots,P_{n,m}\right)\dfrac{\pi_n\Psi}{\|\pi_n\Psi\|}\right) \le C.
\end{equation*}
Since $F$ is continuous, we conclude that
\begin{equation*}
F\left(f\left(P_{1},\dots,P_{m}\right)\Psi\right) \le C.
\end{equation*}
\end{proof}

From Theorem~\ref{theorem:finite_dimensional_reduction} and
Proposition~\ref{proposition:finite_dimensional_approximation_polynomial_tensor},
we have the following corollary.

\begin{Corollary}
\label{corollary:finite_dimensional_reduction}
Let
\begin{enumerate}
\item[(S1)] $V_1,\dots,V_m$ be a finite family of separable Hilbert spaces,
\item[(S2)] $P_{i,1}, \dots, P_{i,n}$ be a finite family of orthogonal projections on $V_i$ for $i \in \{1,\dots,m\}$,
\item[(S3)] $F$ be a real-valued continuous function on $\displaystyle \bigotimes_{i=1}^m V_i$,
\item[(S4)] $f(x_{1,1},\dots,x_{i,j},\dots,x_{m,n}) \in \mathbb{C}\langle x_{1,1},\dots,x_{i,j},\dots,x_{m,n}\rangle$, and
\item[(A1)] there exists a constant $C \in \mathbb{R}$ such that for any $i \in \{1,\dots,m\}$, any finite dimensional subspace $V_{f,i}$ of $V_i$, any orthogonal projections $P_{f,i,1},\dots,P_{f,i,n}$ on $V_{f,i}$,
and any unit vector $\displaystyle \Psi_f \in \bigotimes_{i=1}^m V_{f,i}$, it follows that
\begin{equation*}
F\left(f\left(P_{f,1,1},\dots,P_{f,i,j},\dots,P_{f,m,n}\right) \Psi_f \right) \le C,
\end{equation*}
\end{enumerate}
then
for any unit vector $\displaystyle \Psi \in \bigotimes_{i=1}^m V_{i}$, we have
\begin{equation*}
F\left(f\left(P_{1,1},\dots,P_{i,j},\dots,P_{m,n}\right)\Psi\right) \le C.
\end{equation*}
\end{Corollary}

\subsection{Application: New proof of Tsirelson's inequality}
\label{subsec:tsirelson}

First, note that Tsirelson's inequality
(\ref{eq:tsirelsoninequality}) is straightforward to prove
in the case where either subsystem of the bipartite quantum system has dimension at most two.

\begin{Lemma}
\label{lemma:tsirelsoninequalitytwodimensional}
We assume the setups and conditions of \rm{Theorem~\ref{theorem:tsirelsoninequality}},
and suppose that $\dim V_1 \le 2$.
Then inequality \rm{(\ref{eq:tsirelsoninequality})} holds.
\end{Lemma}
\begin{proof}
If $\dim V_1=1$,  (\ref{eq:tsirelsoninequality}) follows by simple calculus.
Thus, we can assume that $\dim V_1=2$ and $\tau_{P_{1,1},P_{1,2}}$ is irreducible.
From Proposition~\ref{fact:infinitedihedralgroupfinitedimensionalrepresentationdecomposition} and
Proposition~\ref{fact:infinitedihedralgroupfinitedimensionalrepresentation}, there exists $x \in (0,1)$ such that
\begin{align*}
&\mathcal{B}(P_{1,1},P_{1,2},P_{2,1},P_{2,2})\\
&=
\begin{pmatrix}
2x & 2\sqrt{x(1-x)}\\
2\sqrt{x(1-x)} & -2x
\end{pmatrix}
\otimes
B_{1}
+
\begin{pmatrix}
-2(1-x) & 2\sqrt{x(1-x)}\\
2\sqrt{x(1-x)} & 2(1-x)
\end{pmatrix}
\otimes
B_2.
\end{align*}
Define $C,D$ by
\begin{equation*}
C=
\begin{pmatrix}
2x & 2\sqrt{x(1-x)}\\
2\sqrt{x(1-x)} & -2x
\end{pmatrix},
D=
\begin{pmatrix}
-2(1-x) & 2\sqrt{x(1-x)}\\
2\sqrt{x(1-x)} & 2(1-x)
\end{pmatrix}.
\end{equation*}

From Proposition~\ref{fact:infinitedihedralgroupfinitedimensionalrepresentation},
the eigenvalues of $C$ are $-2\sqrt{x},2\sqrt{x}$
and the eigenvalues of $D$ are $-2\sqrt{1-x},2\sqrt{1-x}$.
Therefore, the spectral radius $\rho(\mathcal{B})$ is at most
$2\sqrt{x}+2\sqrt{(1-x)}$.
From simple differential calculus, we conclude that
\begin{equation*}
[0,1] \ni x \mapsto 2\sqrt{x}+2\sqrt{(1-x)} \in \mathbb{R}
\end{equation*}
takes the maximum value at $x=1/2$
and the maximum value is $2\sqrt{2}$.
Therefore, inequality (\ref{eq:tsirelsoninequality}) holds.
\end{proof}

Our new proof of Tsirelson's inequality is the following.

\begin{proof}[New proof of Tsirelson's inequality]

From Example~\ref{example:tsirelson}
and Corollary~\ref{corollary:finite_dimensional_reduction},
we may assume that $\dim V_1<\infty$.
Moreover, from
Proposition~\ref{fact:infinitedihedralgroupfinitedimensionalrepresentationdecomposition},
there exists a finite family $\{W_i\}_{i=1}^m$ of invariant subspaces under $P_{1,1}$ and $P_{1,2}$
such that $\dim W_i \le 2$ for any $i \in \{1,\dots,m\}$ and $\displaystyle V_1=\bigoplus_{i=1}^m W_i$ and
\begin{equation*}
W_{i_1} \perp W_{i_2}\quad (\text{for any }i_1, i_2 \in \{1,\dots,m\}\text{ such that } i_1\ne i_2 ).
\end{equation*}
Since
\begin{itemize}
\item $W_i \otimes V_2$ is invariant under $\mathcal{B}$ for any $i \in \{1,\dots,m\}$,
\item $\displaystyle V_1\otimes V_2=\bigoplus_{i=1}^m W_i \otimes V_2$,
\item 
$
W_{i_1} \otimes V_2 \perp W_{i_2} \otimes V_2\quad
$ for any $i_1, i_2 \in \{1,\dots,m\}$ such that $i_1\ne i_2$,
\end{itemize}

we may assume that $\dim V_1 \le 2$.
From Lemma~\ref{lemma:tsirelsoninequalitytwodimensional},
we conclude that inequality (\ref{eq:tsirelsoninequality}) holds.
\end{proof}

We remark on the difference between our proof and others.
There are many proofs of Tsirelson's inequality.
Several of such proofs are reviewed in \cite{LAKhalfin1992}.
The proof based on (\ref{eq:khalfintsirelsonlandau})
 is known to be particularly simple.

Tsirelson \cite{BSTsirelson1987} established an ingenious technique using the Clifford algebra
to reduce Tsirelson's inequality to the
case where $\dim V_i \le 2$ $(i=1,2)$ in
Theorem~\ref{theorem:tsirelsoninequality}.
Our proof presents another approach
that reduces Tsirelson's inequality to
the two dimensional case,
using Corollary~\ref{corollary:finite_dimensional_reduction}
and Proposition~\ref{fact:infinitedihedralgroupfinitedimensionalrepresentationdecomposition}.

\section{One-shifted form of two orthogonal projections}
\label{sec:two}

We introduce one-shifted form matrix representations for pairs of orthogonal projections on infinite dimensional Hilbert spaces, and present their basic properties and applications.
Let $V$ be a separable infinite dimensional Hilbert space, and $P, Q$ be orthogonal projections on $V$.

In Section~\ref{subsec:oneshiftedformbasic},
we define one-shifted form matrix representations and present their basic properties.
In particular, we prove that if $P$ and $Q$ admit a one-shifted form
then they admit an explicit finite dimensional approximation.

In Section~\ref{subsec:jordan},
we present a complete orthogonal direct sum decomposition of $V$ into invariant subspaces
$\{V_i\}_{i \in I}$
under $P$ and $Q$ such that for any $i \in I$ $\dim(V_i) \le 2$, or 
$P|V_i$ and $Q|V_i$ admit a one-shifted form.
This decomposition is an extension of Jordan's lemma
for finite dimensional Hilbert spaces.

In Section~\ref{subsec:explicit},
we present an explicit formula for finite dimensional approximation of pairs of orthogonal projections, based on the extension of Jordan's lemma.

\subsection{Basic properties of one-shifted form}
\label{subsec:oneshiftedformbasic}

We define one-shifted form matrix representations for pairs of orthogonal projections
as follows.

\begin{Definition}[One-shifted form]
\label{def:oneshiftedform}
Let
\begin{enumerate}
\item[(S1)] $V$ be an infinite dimensional Hilbert space,
\item[(S2)] $P$ and $Q$ be orthogonal projections on $V$, and
\item[(S3)] $\mathcal{U}=\{u_1,v_1,u_2,v_2,\dots\}$ be an orthonormal basis of $V$.
\end{enumerate}
We say that the pair $(V,\{P,Q\})$ admits a one-shifted form
in $\mathcal{U}$
if and only if there are two sequences $\Theta=\{\theta_i\}_{i=1}^\infty, \Omega=\{\omega_i\}_{i=1}^\infty$ of angles in $(0,\pi)$ such that the following conditions hold:
\begin{enumerate}
\item[(i)] For any $i \in \mathbb{N}$,
the spanned subspace $\langle u_i, v_i \rangle$ is invariant under $P$.
The matrix representation of $2P|\langle u_i, v_i \rangle-\operatorname{id}_{\langle u_i, v_i \rangle}$ is
\begin{equation*}
\begin{pmatrix}
\cos\theta_{i} & \sin\theta_{i}\\
\sin\theta_{i} & -\cos\theta_{i}
\end{pmatrix}.
\end{equation*}
\item[(ii)] The equality $Q u_1=u_1$ holds and $\langle v_i, u_{i+1} \rangle$ is invariant under $Q$ for any $i \in \mathbb{N}$.
The matrix representation of $2Q|\langle v_i, u_{i+1} \rangle-\operatorname{id}_{\langle v_i, u_{i+1} \rangle}$ is
\begin{equation*}
\begin{pmatrix}
\cos\omega_{i} & \sin\omega_{i}\\
\sin\omega_{i} & -\cos\omega_{i}
\end{pmatrix}.
\end{equation*}
\end{enumerate}
In particular, $V$ is separable.
The matrix representation of $2P-\operatorname{id}_{V}$ is
\begin{align*}
\left(\begin{array}{>{\centering\arraybackslash}p{0.3cm}>{\centering\arraybackslash}p{0.3cm}>{\centering\arraybackslash}p{0.3cm}>{\centering\arraybackslash}p{0.3cm}>{\centering\arraybackslash}p{0.3cm}>{\centering\arraybackslash}p{0.3cm}}
\cline{1-2}
\multicolumn{2}{|c|}{\multirow{2}{*}{$r(\theta_1)$}} & \multicolumn{1}{c}{} & \multicolumn{3}{c}{\multirow{3}{*}{\mbox{\Large $O$}}}\\
\multicolumn{1}{|c}{} & \multicolumn{1}{c|}{} & \multicolumn{1}{c}{} & & & \\ \cline{1-2}
\multicolumn{2}{c}{} & $\ddots$ & \multicolumn{3}{c}{}\\
 \cline{4-5}
\multicolumn{2}{c}{\multirow{3}{*}{\mbox{\Large $O$}}} & \multicolumn{1}{c}{\multirow{2}{*}{}} & \multicolumn{2}{|c|}{\multirow{2}{*}{$r(\theta_{n})$}} & \\
 & \multicolumn{1}{c}{} & \multicolumn{1}{c}{} & \multicolumn{1}{|c}{} & \multicolumn{1}{c|}{} &  \\ \cline{4-5}
 & \multicolumn{1}{c}{} & \multicolumn{1}{c}{} & \multicolumn{1}{c}{} & \multicolumn{1}{c}{} & $\ddots$ \\ 
\end{array}\right)
\end{align*}
and the matrix representation of $2Q-\operatorname{id}_V$ is
\begin{align*}
\left(
\begin{array}{>{\centering\arraybackslash}p{0.3cm}>{\centering\arraybackslash}p{0.3cm}>{\centering\arraybackslash}p{0.3cm}>{\centering\arraybackslash}p{0.3cm}>{\centering\arraybackslash}p{0.3cm}>{\centering\arraybackslash}p{0.3cm}>{\centering\arraybackslash}p{0.3cm}}
\cline{1-1}
\multicolumn{1}{|c|}{\multirow{1}{*}{$1$}} & $0$ & $0$ & \multicolumn{1}{c}{\multirow{3}{*}{}} & \multicolumn{3}{c}{\multirow{3}{*}{\mbox{\Large $O$}}}\\ \cline{1-3}
\multicolumn{1}{c}{0}  & \multicolumn{2}{|c|}{\multirow{2}{*}{$r(\omega_1)$}} & \multicolumn{1}{c}{} & & & \\
\multicolumn{1}{c}{0} & \multicolumn{1}{|c}{} & \multicolumn{1}{c|}{} & \multicolumn{1}{c}{} &  &  & \\  \cline{2-3}
\multicolumn{3}{c}{} & $\ddots$ & \multicolumn{3}{c}{} \\ \cline{5-6}
\multicolumn{3}{c}{\multirow{4}{*}{\mbox{\Large $O$}}} & \multicolumn{1}{c}{\multirow{3}{*}{}} & \multicolumn{2}{|c|}{\multirow{2}{*}{$r(\omega_{n-1})$}} & \multicolumn{1}{c}{} \\
 & & \multicolumn{1}{c}{} & \multicolumn{1}{c}{} & \multicolumn{2}{|c|}{} & \multicolumn{1}{c}{} \\ \cline{5-6}
 & & \multicolumn{1}{c}{} & \multicolumn{1}{c}{} & \multicolumn{1}{c}{} & \multicolumn{1}{c}{} & \multicolumn{1}{c}{\multirow{1}{*}{$\ddots$}} \\
\end{array}
\right),
\end{align*}
where $r$ is defined by $r(x)=\begin{pmatrix}\cos x & \sin x \\ \sin x & -\cos x \end{pmatrix}\quad(x \in \mathbb{R})$.
Moreover, we say that $(V,\{P,Q\})$ admits  a constant-angle one-shifted form
if and only if $\Theta,\Omega$ are constant and identical $(\text{so }\theta_1=\theta_i=\omega_i$ for any $i \in \mathbb{N})$.
\end{Definition}

We present two characterizations
of pairs of orthogonal projections that admit a one-shifted form.
The second characterization (ii) via cyclic vectors is employed in an extension of Jordan's lemma, which is discussed in Theorem~\ref{theorem:cyclicdecompositionunitaryrepresentationinfinitedihedralgroup}.
The third characterization (iii) via explicit approximations is employed to derive explicit formulas for finite dimensional approximations of pairs of orthogonal projections, which is stated in Theorem~\ref{theorem:twodimensionalapproximation}.

\begin{Proposition}
\label{prop:oneshiftedcyclicrepresentation}
Assume the setups $(S1), (S2)$ in {\rm{Definition~\ref{def:oneshiftedform}}},
and let
\begin{enumerate}
\item[(A1)] $\dim V=\infty$,
\end{enumerate}
then the following three conditions are equivalent.
\begin{enumerate}
\item[(i)] The pair $(V,\{P,Q\})$ admits a one-shifted form.
\item[(ii)] There exists a cyclic unit vector $u \in V$
for $\tau_{P,Q}$ such that $Qu=u$.
\item[(iii)] There exists an orthonormal basis $\mathcal{U}=\{u_1,v_1,u_2,v_2,\dots\}$ of $V$ and sequences $\Theta=(\theta_i)_{i=1}^\infty,\Omega=(\omega_i)_{i=1}^\infty$ of angles in $(0,\pi)$ such that \\$\{(\langle u_1,v_1,\dots,u_n,v_n \rangle,\{P_n(\Theta),Q_n(\Omega)\})\}_{n=1}^\infty$ is a finite dimensional approximation of $(V,\{P,Q\})$.
\end{enumerate}
\end{Proposition}
\begin{proof}[Proof of that (i) $\implies$ (ii) and (iii) $\implies$ (i):]
These implications are straightforward.
\end{proof}
\begin{proof}[Proof of that (i) $\implies$ (iii):]
Let $\mathcal{U}=\{u_1,v_1,u_2,v_2,\dots\}$ be an orthonormal basis of $V$
such that $(V,\{P,Q\})$ has a one-shifted form in $\mathcal{U}$.
Let $v \in V$ and $\varepsilon>0$.
For a sufficiently large $n \in \mathbb{N}$, there exists $w \in \langle u_1,v_1,\dots,u_n,v_n \rangle$
such that $\|v-w\|<\varepsilon$.
Then $Pw=P_{n+1}(\Theta)w$ and $Qw=Q_{n+1}(\Omega)w$.
Hence, condition (iii) is satisfied.
\end{proof}
\begin{proof}[Step~1 in the proof of (ii) $\implies$ (i):]
In this step, we construct $u_1, v_1, u_2$.
Define $D$ and $u_1$ by $D=\{v \in V|\ \|v\|=1\}, u_1=u$.
If $Pu_1 \in \langle u_1 \rangle$,
$\langle u_1 \rangle$ is an invariant subspace under $\tau_{P,Q}$.
Since $u$ is a cyclic vector, we have $\langle u_1 \rangle=V$. That contradicts assumption (A1).
Therefore, the vector $Pu_1$ is not in $\langle u_1 \rangle$.
Hence, there exists $v_1 \in D$ such that $v_1 \in \langle u_1, P u_1 \rangle$
and $v_1 \perp u_1$
and $\langle u_1, v_1 \rangle$ is invariant under $P$.
For any $\phi_1 \in \mathbb{R}$, 
$\langle u_1, \exp(i\phi_1)v_1 \rangle$ is also invariant under $P$ and $P$ is positive semidefinite on $\langle u_1, \exp(i\phi_1)v_1 \rangle$.
Therefore, we may assume that the matrix representation of $P$ on $\langle u_1, \exp(i\phi_1)v_1 \rangle$
is
$
\begin{pmatrix}
x & y\\
y & z
\end{pmatrix}
$
for some $x,y,z \in [0,\infty)$.
Since
$
\begin{pmatrix}
x & y\\
y & z
\end{pmatrix}^2
=
\begin{pmatrix}
x & y\\
y & z
\end{pmatrix},
$
we have
$
x^2+y^2=x, xy+yz=y
$.
Therefore, the equalities $y=\sqrt{x(1-x)}, z=1-x$ hold.
It follows that there exists $\displaystyle \psi_1 \in \left(0,\dfrac{\pi}{2}\right)$ such that
$\sqrt{x}=\cos \psi_1,\sqrt{1-x}=\sin \psi_1$.
Therefore, we have $
\begin{pmatrix}
x & y\\
y & z
\end{pmatrix}
=
\begin{pmatrix}
\cos^2 \psi_1 & \cos \psi_1\sin \psi_1\\
\cos \psi_1 \sin \psi_1 & 1-\cos^2 \psi_1
\end{pmatrix}
$.
Moreover, it follows that
\begin{align*}
\begin{pmatrix}
\cos^2 \psi_1 & \cos \psi_1\sin \psi_1\\
\cos\psi_1\sin\psi_1 & 1-\cos^2 \psi_1
\end{pmatrix}
&=
\dfrac{1}{2}
\begin{pmatrix}
\cos(2\psi_1)+1 & \sin(2\psi_1)\\
\sin(2\psi_1) & 1-\cos(2\psi_1)
\end{pmatrix}\\
&=
\dfrac{1}{2}
\begin{pmatrix}
\cos\theta_1 & \sin\theta_1\\
\sin\theta_1 & -\cos\theta_1
\end{pmatrix}
+\dfrac{1}{2}E_2,
\end{align*}
where $\theta_1=2\psi_1 \in (0,\pi)$.

If $Qv_1 \in \langle u_1, v_1 \rangle$,
$\langle u_1, v_1 \rangle$ is an invariant subspace under $\tau_{P,Q}$.
Since $u$ is a cyclic vector, we have $\langle u_1, v_1 \rangle=V$.
It follows that $\dim V \le 2$. That contradicts assumption (A1).
Therefore, we have $Q v_1 \not\in \langle u_1, v_1 \rangle$.
It follows that there exists $u_2 \in D$ such that $u_2 \in \langle v_1, Q v_1 \rangle$ and $u_2 \perp v_1$.
Then $\langle v_1, u_2 \rangle$ is invariant under $Q$.
Since $u_2=a v_1 + b Qv_1$ for some $a, b \in \mathbb{C}$,
we have $(u_2,u_1)=a(v_1,u_1)+b(Qv_1,u_1)=b(v_1,Qu_1)=0$.
Therefore, we conclude that $u_2 \perp \langle u_1, v_1 \rangle$.
Following the same reasoning as above, we may assume that
there exists $\omega_1 \in (0,\pi)$ such that
the matrix representation of $Q|\langle v_1, u_2 \rangle$ is
\begin{align*}
\dfrac{1}{2}
\begin{pmatrix}
\cos\omega_1 & \sin\omega_1\\
\sin\omega_1 & -\cos\omega_1
\end{pmatrix}
+\dfrac{1}{2}E_2.
\end{align*}

Consequently, 
the vectors $u_1,v_1$ and $u_2$ lie in $D$, are mutually orthogonal, and satisfy the following conditions:
\begin{itemize}
\item The spanned subspace $\langle u_1, v_1 \rangle$ is invariant under $P$, and the matrix representation of $P|\langle u_1, v_1 \rangle$
is
\begin{align*}
\dfrac{1}{2}
\begin{pmatrix}
\cos \theta_1 & \sin \theta_1\\
\sin \theta_1 & -\cos \theta_1
\end{pmatrix}
+\dfrac{1}{2}E_2\quad (\theta_1 \in (0,\pi)).
\end{align*}
\item The equality $Q u_1 = u_1$ holds, and $\langle v_1, u_2 \rangle$ is invariant under $Q$, and the matrix representation of $Q|\langle v_1, u_2 \rangle$
is
\begin{align*}
\dfrac{1}{2}
\begin{pmatrix}
\cos \omega_1 & \sin \omega_1\\
\sin \omega_1 & -\cos \omega_1
\end{pmatrix}
+\dfrac{1}{2}E_2\quad(\omega_1 \in (0,\pi)).
\end{align*}
\end{itemize}
\end{proof}
\begin{proof}[Step~2 in the proof of (ii) $\implies$ (i):]
In this step, we construct the orthogonal basis $\{u_1, v_1,u_2,v_2,\dots\}$ by induction.
Let $i \in \mathbb{N}$ and assume that we get $\{u_1,\dots,u_i,v_i, u_{i+1}\}$
satisfy the claim in condition (i).
Following the same reasoning as Step~1,
we have $Pu_{i+1} \not\in \langle u_1,\dots,u_i,v_i,u_{i+1} \rangle$.
Hence, there exists $v_{i+1} \in D$ such that $v_{i+1} \in \langle u_{i+1},P u_{i+1} \rangle$ and $v_{i+1} \perp u_{i+1}$
and $\langle u_{i+1}, v_{i+1} \rangle$ is invariant under $P$.
As in Step~1, 
we may assume that 
the matrix representation of $P|\langle u_{i+1}, v_{i+1} \rangle$ is
\begin{align*}
\dfrac{1}{2}
\begin{pmatrix}
\cos \theta_{i+1} & \sin \theta_{i+1}\\
\sin \theta_{i+1} & -\cos \theta_{i+1}
\end{pmatrix}
+\dfrac{1}{2}E_2\quad\text{ for some $\theta_{i+1} \in (0,\pi)$}.
\end{align*}

Let $w \in \langle u_1,\dots,u_i,v_i \rangle$.
Since $v_{i+1}=a u_{i+1} + b Pu_{i+1}$ for some $a, b \in \mathbb{C}$ and $\langle u_1,\dots,u_i,v_i \rangle$ is invariant under $P$,
we have $(v_{i+1},w)=a(u_{i+1},w)+b(Pu_{i+1},w)=b(u_{i+1},Pw)=0$.
Therefore, we have
\begin{equation*}
v_{i+1} \perp \langle u_1,v_1,\dots,u_i,v_i,u_{i+1} \rangle.
\end{equation*}

As in Step~1, it follows that $Qv_{i+1} \not\in \langle u_1,v_1,\dots,u_{i+1},v_{i+1} \rangle$.
Hence, there exists $u_{i+2} \in D$ such that $u_{i+2} \in \langle v_{i+1},Q v_{i+1} \rangle$ and $u_{i+2} \perp v_{i+1}$.
Let $w \in \langle u_1,v_1,\dots,u_{i+1} \rangle$.
Since $u_{i+2}=a v_{i+1} + b Qv_{i+1}$ for some $a, b \in \mathbb{C}$ and $\langle u_1,v_1,\dots,u_{i+1} \rangle$ is invariant under $Q$,
we have $(u_{i+2},w)=a(v_{i+1},w)+b(Qv_{i+1},w)=b(v_{i+1},Qw)=0$.
Therefore, we have
\begin{equation*}
u_{i+2} \perp \langle u_1,v_1,\dots,u_{i+1},v_{i+1} \rangle
\end{equation*}
and $\langle v_{i+1}, u_{i+2} \rangle$ is invariant under $Q$.

Following the same reasoning as Step~1, 
we may assume that there exists $\omega_{i+1} \in (0,\pi)$ such that
the matrix representation of $Q|\langle v_{i+1}, u_{i+2} \rangle$ is
\begin{align*}
\dfrac{1}{2}
\begin{pmatrix}
\cos \omega_{i+1} & \sin \omega_{i+1}\\
\sin \omega_{i+1} & -\cos \omega_{i+1}
\end{pmatrix}
+\dfrac{1}{2}E_2.
\end{align*}
Thus we get $u_1,\dots,u_{i+1},v_{i+1},u_{i+2}$ satisfying the claim in condition (i).

By induction,
we get $\mathcal{U}=\{u_1,v_1,\dots\} \subset D$
such that 
\begin{itemize}
\item For any $i \in \mathbb{N}$, the spanned subspace $\langle u_i, v_i \rangle$ is invariant under $P$ and
the matrix representation of $P|\langle u_{i}, v_{i} \rangle$ is
\begin{align*}
\dfrac{1}{2}
\begin{pmatrix}
\cos \theta_{i} & \sin\theta_{i}\\
\sin \theta_{i} & -\cos\theta_{i}
\end{pmatrix}
+\dfrac{1}{2}E_2\quad(\theta_i \in (0,\pi)).
\end{align*}
\item The equality $Q u_1=u_1$ holds, and for any $i \in \mathbb{N}$ $\langle v_i, u_{i+1} \rangle$ is invariant under $Q$ and
the matrix representation of $Q|\langle v_{i}, u_{i+1} \rangle$ is
\begin{align*}
\dfrac{1}{2}
\begin{pmatrix}
\cos \omega_{i} & \sin \omega_{i}\\
\sin \omega_{i} & -\cos \omega_{i}
\end{pmatrix}
+\dfrac{1}{2}E_2\quad(\omega_i \in (0,\pi)).
\end{align*}
\end{itemize}
Moreover, $\langle u_1,v_1,\dots,u_i,v_i,\dots \rangle$ is invariant under $\tau_{P,Q}$.
From the assumption that $u$ is a cyclic vector,
the set $\mathcal{U}$ is an orthonormal basis of $V$.
\end{proof}

We introduce the following notation for constant-angle one-shifted forms.

\begin{Notation}
Let $\theta \in (0,\pi)$ and define $\Theta=\{\theta_i\}_{i=1}^\infty$
and $\Omega=\{\omega_i\}_{i=1}^\infty$ by $\theta_i=\omega_i=\theta$ $(i=1,2,\dots)$.
Define self-adjoint operators $A(\theta), B(\theta)$ on $\ell^2$ by
$A(\theta)=A(\Theta)$ and $B(\theta)=B(\Omega)$.
We identify $\mathbb{C}^n$ with
\\$\left\{\{a_i\}_{i=1}^n \in \ell^2 \middle|a_{n+k} = 0\quad (k=1,2,\dots)\right\}$.
Define the finite dimensional approximation $\mathcal{A}$ of $(\ell^2,\{P(\theta),Q(\theta)\})$ by
$\mathcal{A}=\{(\mathbb{C}^n, \{P_n(\theta), Q_n(\theta)\})\}_{n=1}^\infty$,
where $P_n(\theta)=P_n(\Theta)$, $Q_n(\theta)=Q_n(\Omega)$, $A_n(\theta)=2P_n(\theta)-E_{2n}$, $B_n(\theta)=2Q_n(\theta)-E_{2n}$ $(n=1,2,\dots)$.
\end{Notation}

\subsection{Extension of Jordan's lemma}
\label{subsec:jordan}

An extension of Jordan's lemma is stated in Theorem~\ref{theorem:cyclicdecompositionunitaryrepresentationinfinitedihedralgroup}.
From the viewpoint of group representation theory, Theorem~\ref{theorem:cyclicdecompositionunitaryrepresentationinfinitedihedralgroup} provides an explicit formula that decomposes every infinite dimensional unitary representation of $D_\infty$ into cyclic representations.

\begin{Theorem}
\label{theorem:cyclicdecompositionunitaryrepresentationinfinitedihedralgroup}
Assume the setups $(S1), (S2)$ in {\rm{Definition~\ref{def:oneshiftedform}}},
then there exist at most countable closed invariant subspaces $\{W_i\}_{i \in I}$ under $\tau_{P,Q}$ such that
\begin{enumerate}
\item[(i)] The orthogonal direct sum decomposition $\displaystyle V=\bigoplus_{i \in I}W_i$ holds.
\item[(ii)] For any $i \in I$, the subspace $W_i$ is at most two dimensional or infinite dimensional.
\item[(iii)] For any $i \in I$ such that $\dim W_i \le 2$, $W_i$ is irreducible under $\tau_{P,Q}$.
\item[(iv)] For any $i \in I$ such that $\dim W_{i}=\infty$, the pair $(W_i,\{P|W_i,Q|W_i\})$ has a one-shifted form
and $W_i$ does not admit nonzero finite dimensional invariant subspaces under $\tau_{P,Q}$.
\end{enumerate}
\end{Theorem}
\begin{proof}
We define the sets $\mathcal{B}, \mathcal{C}, \mathcal{D}, \mathcal{T}$ by
\begin{align*}
\mathcal{B}&=\{W|W\text{ is an invariant subspace of $V$ under $\tau_{P, Q}$ and $\dim W \le 2$}\},\\
\mathcal{C}&=\{W|W\text{ is an infinite dimensional invariant subspace of $V$ under $\tau_{P, Q}$} \\
&\text{   and admits no nonzero finite dimensional invariant subspaces under $\tau_{P,Q}$}\\
&\text{   and there exists a cyclic vector $u \in W$ such that $Qu=u$}\},\\
\mathcal{D}&=\mathcal{B} \cup \mathcal{C},\\
\mathcal{T}&=\left\{D \subset \mathcal{D}\middle|W_1 \perp W_2\quad(\text{for any $W_1, W_2 \in D$ such that }  W_1 \ne W_2)\right\}.
\end{align*}

It is straightforward that $\mathcal{T}$ is not empty.
We assume that the order of $\mathcal{T}$ is defined by set inclusion.
Let $T$ be a totally ordered subset of $\mathcal{T}$.
It follows directly that $\displaystyle \bigcup_{D \in T}D \in \mathcal{T}$.
From Zorn's Lemma, the set $\mathcal{T}$ has a maximal
element $D$.
We define $V_0$ by
$\displaystyle
V_0=\bigoplus_{W \in D}W.
$
To derive a contradiction,
we assume $V_0 \ne V$.
If $Q|V_0^{\perp}=0$ or $V_0^{\perp}$ admits a nonzero finite dimensional invariant subspace of $V$ under $\tau_{P,Q}$,
from Proposition~\ref{fact:infinitedihedralgroupfinitedimensionalrepresentationdecomposition},
the subspace $V_0^{\perp}$ has an at most two dimensional
invariant subspace $W$ under $\tau_{P,Q}$.
Then $W \in \mathcal{B}$,
and hence $D \cup \{W\} \in \mathcal{T}$.
That contradicts the fact that $D$ is a maximal element of $\mathcal{T}$.

Assume $Q|V_0^{\perp}\ne 0$ and $V_0^{\perp}$ admits no nonzero finite dimensional invariant subspaces of $V$ under $\tau_{P,Q}$.
Then there exists a unit vector $u \in V_0^{\perp}$ such that $Qu = u$.
Define $W$ by $W=\overline{\langle \tau_{P,Q}(D_\infty)u \rangle}$.
It is straightforward that $W \perp V_0$.
Since $V_0^{\perp}$ does not admit nonzero finite dimensional invariant subspaces of $V$ under $\tau_{P,Q}$,
the subspace $W$ also does not admit such subspaces and $\dim W=\infty$.
Then $W \in \mathcal{C}$ and $W \perp W'$ for any $W' \in D$.
Therefore, we have $D \cup \{W\} \in \mathcal{T}$.
That contradicts the fact that $D$ is a maximal element of $\mathcal{T}$.
Therefore, we have $\displaystyle V=\bigoplus_{W \in D}W$.
Since $V$ is separable,
$D$ must be at most countable.
From Proposition~\ref{prop:oneshiftedcyclicrepresentation},
for any $W \in \mathcal{C}$, $(W,\{P|W,Q|W\})$ has a one-shifted form.
\end{proof}

Note that 
there exists a pair of orthogonal projections $P$ and $Q$ on an infinite dimensional Hilbert space $V$ satisfying the following conditions:
\begin{itemize}
\item The pair $(V,\{P,Q\})$ admits a constant-angle one-shifted form.
\item There does not exist a nonzero finite dimensional invariant subspace under $P$ and $Q$.
\end{itemize}

\begin{Example}
\label{example:properinfinitedimensionalcyclicrepresentation}
Let
\begin{enumerate}
\item[(S1)] $V,V_1,V_2$ be defined by
\begin{align*}
V&=\ell^2=\left\{\{x_j\}_{j=1}^\infty\subset \mathbb{C}\middle|\sum_{j=1}^\infty |x_j|^2<\infty\right\},\\
V_1&=\left\{\{x_j\}_{j=1}^\infty \in \ell^2\middle|x_{2k+1}=x_{2k+2}\quad(k=0,1,2,\dots)\right\},\\
V_2&=\left\{\{x_j\}_{j=1}^\infty \in \ell^2\middle|x_{2k+2}=x_{2k+3}\quad(k=0,1,2,\dots)\right\},
\end{align*}
\item[(S2)] $P_i$ for $i \in \{1,2\}$ be the orthogonal projection onto $V_i$ 
and $A_i$ for $i \in \{1,2\}$ be defined by $A_i=2P_i-\operatorname{id}_V$,
\end{enumerate}
then
\begin{enumerate}
\item[(i)] Let $x=\{x_j\}_{j=1}^\infty \in V$. Then the following holds:
\begin{align*}
P_1x&=(\dfrac{1}{2}(x_1+x_2),\dfrac{1}{2}(x_1+x_2),\dots,\\
&\quad\dfrac{1}{2}(x_{2k+1}+x_{2k+2}),\dfrac{1}{2}(x_{2k+1}+x_{2k+2}),\dots),\\
A_1x&=(x_2,x_1,\dots,
x_{2k+2},x_{2k+1},\dots),\\
P_2x&=(x_1,\dfrac{1}{2}(x_2+x_3),\dfrac{1}{2}(x_2+x_3),\dots,\\
&\quad\dfrac{1}{2}(x_{2k+2}+x_{2k+3}),\dfrac{1}{2}(x_{2k+2}+x_{2k+3}),\dots),\\
A_2x&=(x_1,x_3,x_2,\dots,
x_{2k+3},x_{2k+2},\dots).
\end{align*}
\item[(ii)] Let $x=\{x_j\}_{j=1}^\infty \in V$ and $k \in \mathbb{N}$. Then
\begin{align*}
(A_1A_2x)_{4k+1}&=x_{4k+3},&(A_1A_2x)_{4k+3}&=x_{4k+5},\\
(A_1A_2x)_{4k+2}&=x_{4k},&(A_1A_2x)_{4k+4}&=x_{4k+2},\\
(A_2A_1x)_{4k+1}&=x_{4k-1},&(A_2A_1x)_{4k+3}&=x_{4k+1},\\
(A_2A_1x)_{4k+2}&=x_{4k+4},&(A_2A_1x)_{4k+4}&=x_{4k+6}.
\end{align*}
\item[(iii)] The pair $(V,\{P_1,P_2\})$ admits a constant-angle one-shifted form whose angle is $\dfrac{\pi}{2}$ in the standard basis.
\item[(iv)] There does not exist a nonzero finite dimensional invariant subspace under $P_1$ and $P_2$.
\end{enumerate}
\end{Example}
\begin{proof}[Proof of (i)-(iii)]
These follow directly from the definitions of $P_1$, $P_2$, $A_1$, and $A_2$, and straightforward calculations.
\end{proof}\begin{proof}[Proof of (iv)]
Let $W$ be a nonzero invariant subspace under $\tau_{P_1, P_2}$.
From Proposition~\ref{fact:infinitedihedralgroupfinitedimensionalrepresentationdecomposition},
it is enough to show that $\dim W \ge 3$.
Since $W$ is nonzero, there exists $w \in W \setminus \{0\}$.
From statement (ii), there are $m \in \mathbb{N}$ and a sequence $i_1,\dots,i_m \in \{1,2\}$
and $\alpha \in \mathbb{C}$ such that
the first component of $\alpha A_{i_1}\dots A_{i_m}w$ is $1$.
Without loss of generality,
we may assume that $w_1=1$.
Since $w \in \ell^2$, we have $\displaystyle \lim_{n \to \infty}w_n=0$.
Therefore, there exists $i_0 \in [2,\infty) \cap \mathbb{N}$
such that $|w_{i_0}| \ne 1$ and $w_j=1$ for any $j \in \{1,\dots,i_0-1\}$.
From statement (ii), there exists a finite sequence $j_1,\dots,j_n \in \{1,2\}$ such that
$
A_{j_1}\dots A_{j_n} w = (1,w'_2,\dots),
A_1 A_{j_1}\dots A_{j_n} w = (w'_2, 1,\dots),
$
where $|w'_2| \ne 1$.
Since $(w'_2)^2 \ne 1$, these two vectors are linearly independent.
Hence, there exist $u, v \in W$ such that
$
u=(1,0,u_3,\dots), v=(0, 1, v_3, \dots)
$.

To derive a contradiction, assume that $\dim W = 2$.
Since $A_2 u = (1,u_3,\dots)$ is in $\langle u, v \rangle$, we have $u_3=0$.
Since $A_2 v = (0,v_3,\dots)$ is in $\langle u, v \rangle$, we have $|v_3|=1$.
Therefore, it follows that
$u=(1,0,0, u_4,\dots), v=(0, 1, v_3, v_4, \dots)$ and $|v_3|=1$.
Since $A_1 u = (0, 1,u_4, 0,\dots)$ is in $\langle u, v \rangle$, we have $|u_4|=1$.
Since $A_1 v = (1, 0,v_4, v_3,\dots)$ is in $\langle u, v \rangle$, we have $v_4=0$.
Therefore, the following holds for $k=0$:
\begin{align*}
|u_{4k+1}|&=1, &u_{4k+2}&=0, &u_{4k + 3}&=0, &|u_{4k + 4}|&=1,\\
v_{4k+1}&=0, &|v_{4k+2}|&=1, &|v_{4k + 3}|&=1, &v_{4k + 4}&=0.
\end{align*}

Let $k_0 \in [0, \infty) \cap \mathbb{Z}$
and assume that the above statement holds for $k=k_0$.
From statement (ii), the following equalities hold for some $\{a_i\}_{i=1}^8 \subset \mathbb{C}$:
\begin{align*}
(A_1A_2)^{k_0 + 1}u&=(0, a_1, u_{4(k_0+1)+1}, \dots),\\
(A_1A_2)^{k_0 + 1}v&=(v_{4k_0+3}, a_2, v_{4(k_0+1)+1}, \dots),\\
(A_1A_2)^{k_0 + 1}A_1u&=(u_{4(k_0+1)}, a_3, u_{4(k_0+1)+2}, \dots),\\
(A_1A_2)^{k_0 + 1}A_1v&=(0, a_4, v_{4(k_0+1)+2}, \dots),\\
(A_1A_2)^{k_0 + 2}u&=(u_{4(k_0+1)+1}, a_5, u_{4(k_0+1)+3}, \dots),\\
(A_1A_2)^{k_0 + 2}v&=(v_{4(k_0+1)+1}, a_6, v_{4(k_0+1)+3}, \dots),\\
(A_1A_2)^{k_0 + 2}A_1u&=(u_{4(k_0+1)+2}, a_7, u_{4(k_0+1)+4}, \dots),\\
(A_1A_2)^{k_0 + 2}A_1v&=(v_{4(k_0+1)+2}, a_8, v_{4(k_0+1)+4}, \dots),
\end{align*}
and these vectors are in $\langle u, v \rangle$.
Hence, the following equalities hold:
\begin{align*}
|u_{4(k_0+1)+1}| &= 1, &v_{4(k_0+1)+1} &= 0\\
u_{4(k_0+1)+2} &= 0, &|v_{4(k_0+1)+2}| &= 1\\
u_{4(k_0+1)+3} &= 0, &|v_{4(k_0+1)+3}| &= 1\\
|u_{4(k_0+1)+4}| &= 1, &v_{4(k_0+1)+4} &= 0.
\end{align*}

By induction, the above statement holds for any $k \in [0, \infty) \cap \mathbb{Z}$.
This conclusion contradicts the fact that $\displaystyle \lim_{k \to \infty}u_k=0$.
\end{proof}

\subsection{Explicit construction of finite dimensional approximations}
\label{subsec:explicit}

From Theorem~\ref{theorem:cyclicdecompositionunitaryrepresentationinfinitedihedralgroup}
and Proposition~\ref{proposition:finite_dimensional_approximation_directsum},
we obtain an explicit formula for finite dimensional approximation
of pairs of orthogonal projections, as follows.

\begin{Theorem}
\label{theorem:twodimensionalapproximation}
Assume the setting of {\rm{Theorem~\ref{theorem:cyclicdecompositionunitaryrepresentationinfinitedihedralgroup}}},
and let
\begin{enumerate}
\item[(S3)] $\{I_n\}_{n=1}^\infty$ be a sequence of finite subsets  of $I$ such that $I_n \subset I_{n+1}$  for any $n \in \mathbb{N}$ and $\displaystyle\bigcup_{n=1}^\infty I_n=I$,
\item[(S4)] $\mathcal{U}=\{u_{i,1},v_{i,1},u_{i,2},v_{i,2},\dots\}$ be an orthonormal basis of $W_i$, and $\Theta_i=\{\theta_{i,j}\}_{j=1}^\infty$ and $\Omega_i=\{\omega_{i,j}\}_{j=1}^\infty$ be
in $(0,\pi)$ for $i \in I$ such that $\dim W_i=\infty$,
\item[(S5)] $(W_i,P|W_i,Q|W_i)$ admits a one-shifted form in $\mathcal{U}$
whose angles are $\Theta_i, \Omega_i$ for any $i \in I$ such that $\dim W_i=\infty$, 
\item[(S6)] for $n \in \mathbb{N}$ and $i \in I_n$, $W_{i,n}$ be defined by
\begin{equation*}
W_{i,n}=
\left\{
\begin{array}{ll}
\langle u_{i,1},v_{i,1},\dots,u_{i,n},v_{i,n} \rangle & \quad \dim W_i=\infty, \\
W_i & \quad \text{otherwise}, 
\end{array}
\right.
\end{equation*}
\item[(S7)] for $n \in \mathbb{N}$ and $i \in I_n$, $P_{i,n}$ and $Q_{i,n}$ be defined by
\begin{equation*}
P_{i,n}=
\left\{
\begin{array}{ll}
P_{n}(\Theta_i) & \quad \dim W_i=\infty, \\
P|W_i & \quad \text{otherwise},
\end{array}
\right. \quad
Q_{i,n}=
\left\{
\begin{array}{ll}
Q_{n}(\Omega_i) & \quad \dim W_i=\infty, \\
Q|W_i & \quad \text{otherwise},
\end{array}
\right.
\end{equation*}
and
\item[(S8)] $V_n, P_n,Q_n$ for $n \in \mathbb{N}$ be defined by
\begin{align*}
V_n=\bigoplus_{i \in I_n}W_{i,n},
P_n=\bigoplus_{i \in I_n}P_{i,n},
Q_n=\bigoplus_{i \in I_n}Q_{i,n},
\end{align*}
\end{enumerate}
then the sequence $\{(V_n,\{P_n,Q_n\})\}_{n=1}^\infty$ is a finite dimensional approximation of $(V,\{P,Q\})$.
\end{Theorem}

\section{Spectral radius of commutators of two orthogonal projections}
\label{sec:chsh}

We present a new method for estimating
 the spectral radius $\rho([P,Q])=\rho([A,B])$
for a pair of orthogonal projections $P$ and $Q$ on a Hilbert space $V$,
where $A=2P-\operatorname{id}_V$ and $B=2Q-\operatorname{id}_V$.
As stated in Section~\ref{subsec:chshoperatorintroduction},
this problem is equivalent to estimating the spectral radius of the Bell-CHSH operator.
Moreover, from Theorem~\ref{theorem:cyclicdecompositionunitaryrepresentationinfinitedihedralgroup}, we can reduce the problem of estimating $\rho([A, B])$ to the following two cases:
\begin{quote}
\begin{enumerate}
\item[Case~1] $\tau_{P,Q}$ admits an orthogonal direct sum decomposition into finite dimensional invariant subspaces under $\tau_{P,Q}$,
\item[Case~2] $\tau_{P,Q}$ does not satisfy Case~1 and admits a one-shifted form.
\end{enumerate}
\end{quote}
Case~1 is trivial. Hence we mainly consider Case~2.

In Section~\ref{subsec:inequaity}, we give
upper and lower estimates of $\rho([A,B])$
in Case~1 and Case~2.

In Section~\ref{subsec:equality}, we prove that
these estimates become exact when $P$ and $Q$ admit a constant-angle one-shifted form.

In Section~\ref{subsec:chsh}, we give
upper and lower estimates of the spectral radius of the Bell-CHSH operator in 
the case where every subsystem of the bipartite quantum system satisfies Case~1 or Case~2.

\subsection{Upper and lower estimates of the spectral radius}
\label{subsec:inequaity}

In this section, we use the following notation.

\begin{Notation}
\label{notation:finitedimensionalspectrumsum}
Let
\begin{enumerate}
\item[(S1)] $V$ be a separable Hilbert space,
\item[(S2)] $P$ and $Q$ be orthogonal projections on $V$,
\item[(S3)] $A$ and $B$ be defined by $A=2P-\operatorname{id}_V, B=2Q-\operatorname{id}_V$, and
\item[(S4)] $\mathcal{A}=\{(V_{n},\{P_{n},Q_{n}\})\}_{n=1}^\infty$ be a finite dimensional approximation of \\$(V,\{P,Q\})$.
\end{enumerate}
With this notation,
let $F(A+B, \mathcal{A})$
be the set of all real numbers $\lambda \in (0,2)$
such that $-\lambda$ and $\lambda$ are in
the finite dimensional approximate point spectrum
of $A+B=2(P+Q-\operatorname{id}_V)$ corresponding to $\mathcal{A}$.
Moreover, let $\tilde{F}(A+B, \mathcal{A})$ be defined by
\begin{equation*}
\tilde{F}(A+B, \mathcal{A})=F(A+B, \mathcal{A}) \cup \{0\}.
\end{equation*}
We denote $F(A+B, \mathcal{A}), \tilde{F}(A+B, \mathcal{A})$
by $F(A+B), \tilde{F}(A+B)$, respectively, whenever no confusion arises.
\end{Notation}

\begin{Notation}
\label{notation:finitedimensionalspectrumsumtrivial}
Assume the setups (S1)-(S3) in {\rm{Notation~\ref{notation:finitedimensionalspectrumsum}}},
and that
\begin{enumerate}
\item[(A1)] there exists an at most countable family $\{W_i\}_{i \in I}$
of invariant subspaces under $P$ and $Q$ such that
\begin{equation*}
V=\bigoplus_{\substack{i \in I}} W_i, W_{i_1} \perp W_{i_2}\ (\text{for any $i_1 \ne i_2 \in I$}), \text{ and  let }
\end{equation*}
\item[(S4)] $\{I_n\}_{n=1}^\infty$ be a family of finite subsets
of $I$ such that $I_n \subset I_{n+1}$ for any $n \in \mathbb{N}$ and $\displaystyle \bigcup_{n=1}^\infty I_n=I$.
\end{enumerate}
Under these assumption we define $\mathcal{A}$ by
\begin{align*}
\mathcal{A}&=\{(V_{n},\{P_{n},Q_{n}\})\}_{n=1}^\infty,\\
V_n&=\bigoplus_{i \in I_n} W_i, P_n=\bigoplus_{i \in I_n} P| W_i,
Q_n=\bigoplus_{i \in I_n} Q| W_i\quad (n=1,2,\dots)
\end{align*}
Remak that $\mathcal{A}$ is
a finite dimensional approximation of $(V,\{P,Q\})$.
\end{Notation}

In Theorem~\ref{theorem:refinementtsirelsoninequality}, which is the main result of this section, we provide upper and lower bounds for $\rho([A,B])$.
The upper bound is established in the general setting, without assuming Case~1 or Case~2. 
The lower bound is derived under the assumption that either Case~1 or Case~2 holds. 

\begin{Theorem}
\label{theorem:refinementtsirelsoninequality}
Assume the setups in {\rm{Notation~\ref{notation:finitedimensionalspectrumsum}}},
and let
\begin{enumerate}
\item[(S5)] $A_n$ and $B_n$ be defined by
$A_n=2P_n-\operatorname{id}_{V_n}$, $B_n=2Q_n-\operatorname{id}_{V_n}$ for $n \in \mathbb{N}$,
\end{enumerate}
then the following statements hold:
\begin{enumerate}
\item[(i)] Let $\lambda_{n}$ be an eigenvalue of $A_{n}+B_{n}$
such that $\lambda_{n}^2$ is closest to $2$ for $n \in \mathbb{N}$. Then
\begin{align*}
\rho([A,B]) \le \liminf_{n \to \infty}b(\lambda_{n}).
\end{align*}
\item[(ii)] Assume that $(V,\{P,Q\})$ has a one-shifted form and
$\mathcal{A}$ ( in (S4) of {\rm{Notation~\ref{notation:finitedimensionalspectrumsum}}} ) is
the finite dimensional approximation of $(V,\{P,Q\})$ defined in {\rm{Theorem~\ref{theorem:twodimensionalapproximation}}}.
Then
\begin{align*}
\sup_{\lambda \in \tilde{F}(A+B, \mathcal{A})}b(\lambda) \le \rho([A,B]).
\end{align*}
\item[(iii)] Assume that $(V,\{P,Q\})$ satisfies the conditions (S1)-(S3), (A1)
in {\rm{Notation~\ref{notation:finitedimensionalspectrumsumtrivial}}} and
$\mathcal{A}$ is
the finite dimensional approximation of $(V,\{P,Q\})$ defined in {\rm{Notation~\ref{notation:finitedimensionalspectrumsumtrivial}}}.
Then
\begin{equation*}
\sup_{\lambda \in \tilde{F}(A+B, \mathcal{A})}b(\lambda)=\rho([A,B]) = \liminf_{n \to \infty}b(\lambda_{n}).
\end{equation*}
\end{enumerate}
\end{Theorem}

\ifthenelse{\value{arxiv} = 1}{
After the submission of the first version of this preprint \cite{YFujii2025}, B\"ottcher and Spitkovsky \cite{ABottcher2026} obtained an infinite dimensional extension of Proposition~\ref{prop:commutorspectrum_finitedimension}.
}{
After the submission of the preprint \cite{YFujii2025} of this paper and after the submission of this paper itself, B\"ottcher and Spitkovsky \cite{ABottcher2026} obtained an infinite dimensional extension of Proposition~\ref{prop:commutorspectrum_finitedimension}, from which the part $F(A+B,\mathcal{A}) \subset \sigma(A+B)$ of our Theorem~\ref{theorem:refinementtsirelsoninequality} (ii) can be alternatively derived.
}
Hereafter, we provide a proof of Theorem~\ref{theorem:refinementtsirelsoninequality}
without their extension.

As a preparation for the proof of Theorem~\ref{theorem:refinementtsirelsoninequality}, we first identify 
$F(A+B)$ in Case~1. In Case~1, Proposition~\ref{fact:infinitedihedralgroupfinitedimensionalrepresentation}
allows us to directly identify set $F(A+B)$, as follows.

\begin{Lemma}
\label{lemma:finitedimensionalspectrumsumtrivial}
Assume the setups in {\rm{Notation~\ref{notation:finitedimensionalspectrumsumtrivial}}}.
Then we have
\begin{equation*}
F(A+B,\mathcal{A})=2\overline{\bigcup_{i \in I} \sigma((P+Q)|W_i-\operatorname{id}_{W_i})} \cap (0,2).
\end{equation*}
\end{Lemma}

We give the proof of Theorem~\ref{theorem:refinementtsirelsoninequality} as follows.

\begin{proof}[Proof of (i) of {\rm{Theorem~\ref{theorem:refinementtsirelsoninequality}}}:]
Let $\{(V_{\phi(j)},\{P_{\phi(j)},Q_{\phi(j)}\})\}_{j=1}^\infty$
be a subsequence of $\mathcal{A}$
such that $\{b(\lambda_{\phi(j)})\}_{j=1}^\infty$ converges,
and let $v \in V$ and $\varepsilon > 0$.
From Proposition~\ref{proposition:finite_dimensional_approximation_polynomial},
there exists $n_0 \in \mathbb{N}$ such that
 for any $n \in [n_0,\infty) \cap \mathbb{N}$, we have
$|([A,B]v,v)|-\varepsilon<|([A_n,B_n]\pi_nv,\pi_nv)|$.
From Proposition~\ref{fact:infinitedihedralgroupfinitedimensionalrepresentation},
for any $n \in [n_0,\infty) \cap \mathbb{N}$
\begin{equation*}
|([A_n,B_n]\pi_nv,\pi_nv)|
\le \rho([A_n,B_n]) = b(\lambda_n).
\end{equation*}
Hence, we have
$
\displaystyle
|([A,B]v,v)|-\varepsilon \le \liminf_{n \to \infty}b(\lambda_n)
$.
Since $\varepsilon$ is arbitrary, we conclude that
$\displaystyle |([A,B]v,v)| \le \liminf_{n \to \infty}b(\lambda_n)$.
\end{proof}
\begin{proof}[Proof of (ii) of {\rm{Theorem~\ref{theorem:refinementtsirelsoninequality}}}:]
If $F(A+B) = \emptyset$, it is straightforward that $\displaystyle \sup_{\lambda \in \tilde{F}(A+B, \mathcal{A})}b(\lambda) \le \rho([A,B])$.
Thus we may assume that $F(A+B) \ne \emptyset$.
Let $\lambda \in F(A+B)$.
Let $\{\lambda_i\}_{i=1}^\infty$ be a sequence in $(0,2)$, and let $\{u_i\}_{i=1}^\infty$ and $\{v_i\}_{i=1}^\infty$ be sequences
 of unit vectors in $V$ such that
\begin{itemize}
\item For any $i \in \mathbb{N}$, $-\lambda_i$ and $\lambda_i$ are eigenvalues of $A_i+B_i$.
\item For any $i \in \mathbb{N}$, $u_i$ and $v_i$ are  eigenvectors of $A_{i}+B_{i}$
corresponding to $-\lambda_i$ and $\lambda_i$, respectively.
\item The following convergence holds:
\begin{align*} 
\lim_{i\to \infty}\lambda_i=\lambda,
\lim_{i\to \infty}\max\{\|(A- A_i) u_i\|, \|(A- A_i) v_i\|\}=0.
\end{align*}
\end{itemize}

Let $i \in \mathbb{N}$.
Then all subdiagonal elements of $A_i+B_i$ are nonzero.
From Proposition~\ref{fact:tridiagonaleigenvaluesdistinctcondition},
the eigenvalues of $A_i+B_i$ are distinct.
From Proposition~\ref{fact:infinitedihedralgroupfinitedimensionalrepresentationdecomposition} 
and
Proposition~\ref{fact:infinitedihedralgroupfinitedimensionalrepresentation},
there exists a two dimensional irreducible invariant subspace $W$ under $\tau_{P_i,Q_i}$ such that
$(A_i+B_i)|W$ has eigenvalues $-\lambda_i, \lambda_i$
and $u_i, v_i \in W$.
From 
Proposition~\ref{fact:infinitedihedralgroupfinitedimensionalrepresentation},
there exist $\alpha_i, \beta_i \in \mathbb{C}$ such that
$|\alpha_i|^2+|\beta_i|^2=1$ and
\begin{equation*}
|([A_i,B_i]w_i, w_i)|=\rho([A_i|W,B_i|W])=b(\lambda_i),
\end{equation*}
where
$
w_i=\alpha_i u_i+\beta_i v_i
$.
From the definition of the spectral radius of $[A,B]$,
we have $|([A,B]w_i, w_i)| \le \rho([A, B])$.
Let $\varepsilon>0$.

We show that,
for any sufficiently large $i \in \mathbb{N}$,
the inequality
$
|[A,B]w_i-[A_i,B_i]w_i| < \varepsilon
$ holds.
From the definition of $\{u_i\}_{i=1}^\infty$ and $\{v_i\}_{i=1}^\infty$, there exists $i_0 \in \mathbb{N}$ such that
for any $i \in [i_0,\infty) \cap \mathbb{N}$
\begin{equation*}
\max\{\|(A-A_i)u_i\|, \|(B-B_i)u_i\|,\|(A-A_i)v_i\|, \|(B-B_i)v_i\|\} < \dfrac{\varepsilon}{8}.
\end{equation*}
Let $i \in [i_0,\infty) \cap \mathbb{N}$.
Since $|\alpha_i|,|\beta_i| \le 1$, it follows that
\begin{equation*}
\|B w_i-B_i w_i\| \le |\alpha_i|\|(B-B_i)u_i\|+|\beta_i|\|(B-B_i)v_i\| < \dfrac{\varepsilon}{4}. 
\end{equation*}
Because $A$ is unitary, we obtain
\begin{equation*}
\|AB w_i-AB_i w_i\| < \dfrac{\varepsilon}{4}. 
\end{equation*}
Since $W$ is invariant under $B_i$ and $B_i$ is unitary,
we have $B_i w_i \in W$ and $\|B_i w_i\| = 1$.
From 
Proposition~\ref{fact:infinitedihedralgroupfinitedimensionalrepresentation},
there exists $\alpha_i',\beta_i' \in \mathbb{C}$ such that
\begin{equation*}
B_i w_i=\alpha_i' u_i + \beta_i' v_i,\quad |\alpha_i'|^2+|\beta_i'|^2 =1.
\end{equation*}
Since $|\alpha_i'|,|\beta_i'| \le 1$, it follows that
\begin{equation*}
\|AB_i w_i-A_iB_i w_i\| \le |\alpha_i'|\|(A-A_i)u_i\|+|\beta_i'|\|(A-A_i)v_i\| < \dfrac{\varepsilon}{4}. 
\end{equation*}
Therefore, we obtain $\|ABw_i-A_iB_iw_i\| < \dfrac{\varepsilon}{2}$.
Following the same reasoning as above, we have 
$\|BAw_i-B_iA_iw_i\| < \dfrac{\varepsilon}{2}$.
We conclude that $\|[A,B]w_i-[A_i,B_i]w_i\| < \varepsilon$.

Thus we have
\begin{equation*}
b(\lambda_i)-\varepsilon=|([A_i,B_i]w_i,w_i)|-\varepsilon \le |([A,B]w_i,w_i)| \le \rho([A,B]).
\end{equation*}
Since $b$ is continuous, by letting $i \to \infty$, we have
$
b(\lambda)-\varepsilon \le \rho([A,B])
$.
Since $\varepsilon$ is arbitrary, we conclude that $b(\lambda) \le \rho([A,B])$.
\end{proof}
\begin{proof}[Proof of (iii) of {\rm{Theorem~\ref{theorem:refinementtsirelsoninequality}}}:]
It follows from Lemma~\ref{lemma:finitedimensionalspectrumsumtrivial}.
\end{proof}

\subsection{Equality conditions for the spectral radius estimates}
\label{subsec:equality}

The spectral radius of $[A(\theta),B(\theta)]$ for $\theta \in (0,\pi)$ is given below.

\begin{Example}
\label{example:constantangleoneshiftedform}
The following statements hold:
\begin{enumerate}
\item[(i)] The set $F(A(\theta)+B(\theta))$ satisfies
\begin{equation*}
(0,2\sin\theta) \subset F(A(\theta)+B(\theta)) \subset
(0,2\sin\theta].
\end{equation*}
\item[(ii)] The following equality holds:
\begin{equation*}
\displaystyle \sup_{\lambda \in F(A(\theta)+B(\theta))}b(\lambda)=\lim_{n \to \infty}b(\lambda_n)=\rho([A(\theta),B(\theta)]),
\end{equation*}
where $\lambda_{n}$ is an eigenvalue of $A_{n}(\theta)+B_{n}(\theta)$
whose square is closest to $2$ for $n \in \mathbb{N}$.
\item[(iii)] For any $\theta_1 \ne \theta_2 \in (0,\pi)$,
the unitary representation
$\tau_{P(\theta_1),Q(\theta_1)}$ is not unitarily equivalent
to the unitary representation $\tau_{P(\theta_2),Q(\theta_2)}$.
\item[(iv)] We have
\begin{equation*}
\rho([A(\theta),B(\theta)])=
\left\{
\begin{array}{ll}
2|\sin(2\theta)| & \quad |\theta - \dfrac{\pi}{2}| > \dfrac{\pi}{4}, \vspace{2mm}\\
2 & \quad |\theta - \dfrac{\pi}{2}| \le \dfrac{\pi}{4}.
\end{array}
\right.
\end{equation*}
\end{enumerate}
\end{Example}

The proof of this example is given in
\ifthenelse{\value{arxiv} = 1}{
 \ref{appndix:constantangleoneshiftedform}.
}{
 \cite{YFujii2025}.
}

\subsection{Application: Estimating spectral radius of Bell-CHSH operator}
\label{subsec:chsh}

For bipartite quantum systems, we use the following notation.

\begin{Notation}
\label{notation:bipartiatesystem}
Let
\begin{enumerate}
\item[(S1)] $V_1$ and $V_2$ be separable Hilbert spaces,
\item[(S2)] $P_{i,j}$ be orthogonal projections on $V_i$ for $i,j \in \{1,2\}$,
\item[(A1)] $(V_i,\{P_{i,1},P_{i,2}\})$ satisfies either {\rm{Case~1}} or {\rm{Case~2}} for any $i \in \{1,2\}$, and
\item[(S3)] for $i \in \{1,2\}$,
let $\mathcal{A}_i=\{(V_{i,n},\{P_{i,1,n},P_{i,2,n}\})\}_{n=1}^\infty$ be a finite dimensional approximation of $(V_i,\{P_{i,1},P_{i,2}\})$,
\item[(S4)] for $i \in \{1,2\}$,
when $(V_i,\{P_{i,1},P_{i,2}\})$ satisfies {\rm{Case~1}}, let $\mathcal{A}_i$ be the approximation defined in {\rm{Notation~\ref{notation:finitedimensionalspectrumsumtrivial}}},
\item[(S5)] for $i \in \{1,2\}$,
when $(V_i,\{P_{i,1},P_{i,2}\})$ satisfies {\rm{Case~2}}, let $\mathcal{A}_i$ be the approximation defined in {\rm{Theorem~\ref{theorem:twodimensionalapproximation}}}.
\end{enumerate}
Under these assumptions, we define $A_i, B_i,A_{i,n},B_{i,n},\lambda_{i,n}$ $(i \in \{1,2\}, n \in \mathbb{N})$
as follows:
\begin{enumerate}
\item[(i)] $A_i=2P_{1,i}-\operatorname{id}_{V_1}, B_i=2P_{2,i}-\operatorname{id}_{V_2}$,
\item[(ii)] $A_{i,n}=2P_{1,i,n}-\operatorname{id}_{V_{1,n}}$,
\item[(iii)] $B_{i,n}=2P_{2,i,n}-\operatorname{id}_{V_{2,n}}$,
\item[(iv)] $\lambda_{1,n}$ is an eigenvalue of $A_{1,n}+A_{2,n}$
whose square is closest to $2$,
\item[(v)] $\lambda_{2,n}$ is an eigenvalue of $B_{1,n}+B_{2,n}$ whose square is closest to $2$.
\end{enumerate}
\end{Notation}

From Theorem~\ref{theorem:refinementtsirelsoninequality},
we obtain upper and lower bounds for the spectral radius of the Bell-CHSH operator,
as follows.

\begin{Theorem}
\label{theorem:chsh}
Assume the setups (S1)-(S5), assumption (A1), notation (i)-(v) in {\rm{Notation~\ref{notation:bipartiatesystem}}}.
Under these assumptions, we have
\begin{equation*}
\sup_{\substack{\lambda \in \tilde{F}(A_1+A_2),\\\lambda' \in \tilde{F}(B_1+B_2)}}\sqrt{4+b(\lambda)b(\lambda')} \le \rho(\mathcal{B}) \le \liminf_{n \to \infty}\sqrt{4+b(\lambda_{1,n})b(\lambda_{2,n})}.
\end{equation*}
\end{Theorem}

\section*{Acknowledgements}
We thank Professors Albrecht B\"ottcher and Ilya M. Spitkovsky for their helpful comments and for sharing valuable insights relevant to this work.
They kindly pointed out minor errors in the earlier version of our preprint \cite{YFujii2025} and helped us correct them, for which we are deeply grateful.
Their observations greatly improved both the accuracy and the clarity of the current manuscript.

We also thank the anonymous referee for the careful reading of our manuscript and for the insightful comments that significantly improved the exposition and helped us better emphasize the contribution of our decomposition formula.

Y. F. would like to acknowledge Koichi Tojo, Takayuki Okuda, Yuichiro Tanaka, Yoji Koyama, and Kohei Hatano for their valuable comments and suggestions.

\ifthenelse{\value{arxiv} = 1}{
\appendix
\section{Proof of Example~\ref{example:constantangleoneshiftedform}}
\label{appndix:constantangleoneshiftedform}

As a preparation, we prove some properties of the eigenvalues of $A_n(\theta)+B_n(\theta)$ for $n \in \mathbb{N}$.

\begin{apxlemma}
\label{lemma:constantshiftedform}
Given $\theta \in (0,\pi)$, let
\begin{enumerate}
\item[(S1)] Functions $f_n:\mathbb{R}\setminus\left\{\dfrac{k}{n+1}\pi\middle|k \in \mathbb{Z}\right\} \to \mathbb{R}$ for $n \in \mathbb{N}$ be defined by
\begin{equation*}
f_n(\phi)=\dfrac{\sin((n-1)\phi)}{\sin((n+1)\phi)}\quad (\phi \in \mathbb{R}),
\end{equation*}
\item[(S2)] Intervals $I_{n,k}$ for $n \in \mathbb{N}$ and $k \in [0,n] \cap \mathbb{Z}$ be defined by
\begin{equation*}
I_{n,k}=\left(\dfrac{k}{n+1}\pi,\dfrac{k+1}{n+1}\pi\right), \text{ and}
\end{equation*}
\item[(S3)] Sets $Z_n$ for $n \in \mathbb{N}$ be defined by
\begin{equation*}
Z_n=\left\{\phi \in \mathbb{R}\middle| (1+\cos\theta)^2\sin((n-1)\phi)=\sin^2(\theta)\sin((n+1)\phi)\right\},
\end{equation*}
\end{enumerate}
then the following statements hold:
\begin{enumerate}
\item[(i)] For any $n \in \mathbb{N}$ and $\phi \in [0,\pi]$,
\begin{equation*}
f_{2n}(\phi)=f_{2n}(\pi-\phi)=f_{2n}(\pi+\phi).
\end{equation*}
\item[(ii)] For any sufficiently large $n \in \mathbb{N}$, $f_{2n}$ is monotone in each of the following two intervals:
\begin{equation*}
\left(0,\dfrac{\pi}{2(2n+1)}\right),\left(\pi-\dfrac{\pi}{2(2n+1)},\pi\right).
\end{equation*}
\item[(iii)] For any sufficiently large $n \in \mathbb{N}$ and $k \in ([1,2n-1] \cap \mathbb{N})\setminus \{n\}$, $f_{2n}$ is monotone in each of $\left[\dfrac{\pi}{2(2n+1)},\dfrac{\pi}{2n+1}\right)$, $I_{2n,k}$,\\ $\left(\dfrac{2n\pi}{2n+1},\pi-\dfrac{\pi}{2(2n+1)}\right]$.
\item[(iv)] Assume that $\theta \ne \dfrac{1}{2}\pi$.
Then, for any sufficiently large $n \in \mathbb{N}$,
\begin{align*}
\#(Z_{2n} \cap I_{2n,0}) \le 3, \#(Z_{2n} \cap I_{2n,2n}) \le 3, Z_{2n} \cap I_{2n,n}=\emptyset,
\end{align*}
and $\{0,\pi\} \subset Z_{2n}$,
and for any $k \in ([1,2n-1] \cap \mathbb{N})\setminus \{n\}$
\begin{align*}
\#(Z_{2n} \cap I_{2n,k})=1.
\end{align*}
\item[(v)] Assume that $\theta = \dfrac{1}{2}\pi$.
Then
\begin{equation*}
Z_{2n} \cap [0,\pi] \subset \left\{0,\pi\right\} \cup \left\{\dfrac{1}{4n}\pi,\dfrac{3}{4n}\pi,\dots,\dfrac{4n-1}{4n}\pi\right\}.
\end{equation*}
\item[(vi)] Assume that $\theta \ne \dfrac{1}{2}\pi$. Then 
for any sufficiently large $n \in \mathbb{N}$,
\begin{align*}
&\#(\sigma(A_{n}(\theta)+B_{n}(\theta)) \cap 2\cos(I_{2n,0})\sin\theta) \le 1,\\
&\#(\sigma(A_{n}(\theta)+B_{n}(\theta)) \cap 2\cos(I_{2n,2n})\sin\theta) \le 1,\\
&\sigma(A_{n}(\theta)+B_{n}(\theta)) \cap 2\cos(I_{2n,n})\sin\theta=\emptyset,
\end{align*}
and for any $k \in ([1,2n-1] \cap \mathbb{N}) \setminus \{n\}$,
\begin{align*}
\#(\sigma(A_{n}(\theta)+B_{n}(\theta)) \cap 2\cos(I_{2n,k})\sin\theta) = 1.
\end{align*}
\item[(vii)] Assume that $\theta = \dfrac{1}{2}\pi$. Then for any $n \in \mathbb{N}$
\begin{align*}
&\sigma(A_{n}(\theta)+B_{n}(\theta)) \subset \{-2,2\}\cup\left\{2\cos\left(\dfrac{2k-1}{4n}\pi\right)\middle|k=1,2,\dots,2n\right\}.
\end{align*}
\item[(viii)] Let $\varepsilon>0$ and $\lambda_0 \in (-2\sin\theta,2\sin\theta)$ and $\eta > 0$
such that $|\lambda_0| + \eta<2\sin\theta$.
Then there exist $n_0 \in \mathbb{N}$ such that
for any $n \in [n_0,\infty) \cap \mathbb{N}$
and any eigenvalue $\lambda$ of $A_{n}(\theta)+B_{n}(\theta)$ such that $\lambda \in (\lambda_0-\eta,\lambda_0+\eta)$, and for any unit eigenvector $u$ corresponding to $\lambda$, we have $|u_{2n}|<\varepsilon$.
\item[(ix)] Assume that $\theta < \dfrac{1}{2}\pi$.
Let $\varepsilon>0$. Then for any sufficiently large $n \in \mathbb{N}$, there exists $\lambda \in [2-\varepsilon,2]$ such that
\begin{equation*}
\sigma(A_n(\theta)+B_n(\theta)) \cap [-2\sin\theta,2\sin\theta]^c=\{-\lambda,\lambda\}.
\end{equation*}
\item[(x)] Assume that $\theta > \dfrac{1}{2}\pi$.
Then for any sufficiently large $n \in \mathbb{N}$,
\begin{align*}
&\#(\sigma(A_{n}(\theta)+B_{n}(\theta)) \cap 2\cos(I_{2n,0})\sin\theta) = 1,\\
&\#(\sigma(A_{n}(\theta)+B_{n}(\theta)) \cap 2\cos(I_{2n,2n})\sin\theta) = 1,\\
&\sigma(A_n(\theta)+B_n(\theta)) \subset [-2\sin\theta,2\sin\theta].
\end{align*}
\end{enumerate}
\end{apxlemma}
\begin{proof}[Proof of (i)]
From the fundamental trigonometric identity, we have
\begin{align*}
\sin((2n+1)(\pi-\phi))&=\sin((2n+1)\phi),\\
\sin((2n-1)(\pi-\phi))&=\sin((2n-1)\phi),\\
\sin((2n+1)(\pi+\phi))&=(-1)\sin((2n+1)\phi),\\
\sin((2n-1)(\pi+\phi))&=(-1)\sin((2n-1)\phi).
\end{align*}
Thus, (i) holds.
\end{proof}
\begin{proof}[Proof of (ii)]
Let $n \in \mathbb{N}$.
From statement (i), it is enough to prove that
$f_{2n}$ is monotone in $\left(0,\dfrac{\pi}{2(2n+1)}\right)$.
Let $\phi \in \left(0,\dfrac{\pi}{2(2n+1)}\right)$.
Then we have
\begin{align*}
f_{2n}^{\prime}(\phi)
&=\dfrac{1}{\sin^2((2n+1)\phi)}\{(2n-1)\cos((2n-1)\phi)\sin((2n+1)\phi)\\
&\quad-(2n+1)\sin((2n-1)\phi)\cos((2n+1)\phi)\}\\
&=
\dfrac{(2n-1)\cos((2n-1)\phi) \sin((2n+1)\phi)}{\sin^2((2n+1)\phi)}
\left\{
1-\dfrac{b\tan(a\phi)}{a\tan(b\phi)}
\right\},
\end{align*}
where $a=2n-1, b=2n+1$.
There exists a sequence $\{c_i\}_{i=0}^\infty$ of positive numbers
such that
for any constant $C>0$ and $x \in \left[0,\dfrac{\pi}{2C}\right)$,
$\displaystyle
\tan(Cx)=\sum_{i=0}^\infty c_i C^{2i+1}x^{2i+1}
$.
Therefore, we have $1-\dfrac{b\tan(a\phi)}{a\tan(b\phi)} > 0$.
Thus $f_{2n}$ is monotone in $\left(0,\dfrac{\pi}{2(2n+1)}\right)$.
\end{proof}
\begin{proof}[Proof of (iii)]
Let $\phi$ be a number in
\begin{equation*}
\left[\dfrac{\pi}{2(2n+1)},\dfrac{\pi}{2n+1}\right) \cup \bigcup_{k \in \{1,\dots,2n-1\}\setminus\{n\}}I_{2n,k} \cup \left(\dfrac{2n\pi}{2n+1},\pi-\dfrac{\pi}{2(2n+1)}\right].
\end{equation*}
Then, the following equality holds:
\begin{align*}
&f_{2n}^{\prime}(\phi)\\
&=
\dfrac{1}{\sin^2((2n+1)\phi)}
\{
(2n-1)\cos((2n-1)\phi) \sin((2n+1)\phi)
\\
&\quad
-
(2n+1)\sin((2n-1)\phi)\cos((2n+1)\phi)
\}\\
&=
\dfrac{(2n-1)}{\sin^2((2n+1)\phi)}
\{
\cos((2n-1)\phi) \sin((2n+1)\phi)\\
&\quad-
\sin((2n-1)\phi)\cos((2n+1)\phi)\\
&\quad
+\left(1-\dfrac{(2n+1)}{(2n-1)}\right)\sin((2n-1)\phi)\cos((2n+1)\phi)
\}\\
&=
\dfrac{1}{\sin^2((2n+1)\phi)}
\left\{
(2n-1)\sin(2\phi)
-
2\sin((2n-1)\phi)\cos((2n+1)\phi)
\right\}.
\end{align*}
Here,
\begin{equation*}
\dfrac{1}{2n+1}\pi \le 2\phi \le \dfrac{2n}{2n+1}\pi\quad
\text{or}\quad
\dfrac{2n+2}{2n+1}\pi \le 2\phi \le 2\pi-\dfrac{\pi}{2n+1}.
\end{equation*}
Hence, we have
\begin{align*}
|(2n-1)\sin(2\phi)| \ge \left(2n-1\right)
\min\left\{a,b,c,d\right\},
\end{align*}
where $a,b,c$ are defined by
\begin{align*}
a&=\left|\sin\left(\dfrac{1}{2n+1}\pi\right)\right|,
b=\left|\sin\left(\dfrac{2n}{2n+1}\pi\right)\right|,\\
c&=\left|\sin\left(\dfrac{2n+2}{2n+1}\pi\right)\right|,
d=\left|\sin\left(2\pi-\dfrac{\pi}{2n+1}\right)\right|
\end{align*}
From the proof of (i), we have
\begin{align*}
&\left|\sin\left(\dfrac{2n}{2n+1}\pi\right)\right|=
\left|\sin\left(\pi-\dfrac{1}{2n+1}\pi\right)\right|
=\left|\sin\left(\dfrac{1}{2n+1}\pi\right)\right|,\\
&\left|\sin\left(\dfrac{2n+2}{2n+1}\pi\right)\right|=
\left|\sin\left(\pi+\dfrac{1}{2n+1}\pi\right)\right|
=\left|\sin\left(\dfrac{1}{2n+1}\pi\right)\right|.
\end{align*}
Since for any sufficiently large $n \in \mathbb{N}$ 
\begin{align*}
\left(2n-1\right)\sin\left(\dfrac{1}{2n+1}\pi\right)
=
\sin\left(\dfrac{1}{2n+1}\pi\right)
\dfrac{2n+1}{\pi}
\dfrac{2n-1}{2n+1}
\pi
>2,
\end{align*}
we have $|(2n-1)\sin(2\phi)| > 2$.
Since $|2\sin((2n-1)\phi)\cos((2n+1)\phi)| \le 2$, 
for any sufficiently large $n \in \mathbb{N}$,
$f_{2n}$ is monotone in each of the following intervals:
\begin{align*}
&\left[\dfrac{\pi}{2(2n+1)},\dfrac{\pi}{2n+1}\right),I_{2n,1},\dots,I_{2n,n-1},I_{2n,n+1},\dots,I_{2n,2n-1},\\
&\left(\dfrac{2n\pi}{2n+1},\pi-\dfrac{\pi}{2(2n+1)}\right].
\end{align*}
\end{proof}
\begin{proof}[Proof of (iv)]
Since
$\dfrac{\sin^2\theta}{(1+\cos\theta)^2}
=
\tan^2\left(\dfrac{\theta}{2}\right)
$,
we have $\dfrac{\sin^2 \theta}{(1+\cos\theta)^2} \ne 1$.
For any sufficiently large $n \in \mathbb{N}$,
\begin{align*}
\dfrac{2n-1}{2n+1} \ne \dfrac{\sin^2 \theta}{(1+\cos\theta)^2},\quad
\sin\left(\dfrac{\pi}{2n+1}\right)\dfrac{2n+1}{\pi}\dfrac{2n-1}{2n+1}\pi>2,
\end{align*}
and for any $\phi \in I_{2n,n}$
$
|f_{2n}(\phi)-1|<1-\dfrac{\sin^2 \theta}{(1+\cos\theta)^2}
$.

Fix such an $n \in \mathbb{N}$.
It follows that $Z_{2n} \cap I_{2n,n} = \emptyset$.

From statements (ii) and (iii), we have
$\#(Z_{2n} \cap I_{2n,0}), \#(Z_{2n} \cap I_{2n,2n}) \le 3$.

Let $k \in \{1,\dots,2n-1\}\setminus\{n\}$.
From statement (iii) and
\begin{align*}
\lim_{\eta \to \frac{k}{2n+1}+0}|f_{2n}(\eta)|=
\lim_{\eta \to \frac{k+1}{2n+1}-0}|f_{2n}(\eta)|=
\infty,\\
\lim_{\substack{\eta_1 \to \frac{k}{2n+1}+0, \\ \eta_2 \to \frac{k+1}{2n+1}-0}}f_{2n}(\eta_1)f_{2n}(\eta_2)=-\infty,
\end{align*}
we have $\#(Z_{2n} \cap I_{2n,k})=1$.
\end{proof}
\begin{proof}[Proof of (v)]
Let $\phi \in Z_{2n}$.
Since
$\dfrac{\sin^2 \theta}{(1+\cos \theta)^2}=1$, we have
\begin{equation*}
\sin((2n-1)\phi)=\sin((2n+1)\phi).
\end{equation*}
Therefore, there exists $k \in \mathbb{Z}$ such that
\begin{equation*}
4n \phi = (2k+1) \pi\text{ or }
2 \phi = 2k \pi.
\end{equation*}
Hence, statement (v) holds.
\end{proof}
\begin{proof}[Proof of (vi)-(viii)]
We will first prove (vi) and (vii).
In order to do so,
we apply the result of Yueh and Cheng \cite{WCYueh2008}, which is introduced in Section~\ref{subsubsec:toeplitz}, to the symmetric tridiagonal matrix
\begin{equation*}
A(\theta)+B(\theta)
=
\begin{pmatrix}
    1+\cos\theta & \sin\theta & 0 & \dots & 0 & 0\\
        \sin\theta & 0 & \sin\theta & \dots & 0 & 0\\
                     0 & \sin\theta &  0 & \dots & 0 & 0\\
                    \dots & \dots & \dots & \dots & \dots & \dots\\
                     0 & 0 & 0 & \dots & 0 & \sin\theta\\
                     0 & 0 & 0 & \dots & \sin\theta & -(1+\cos\theta)
\end{pmatrix}.
\end{equation*}
Let $a,b,c,\alpha,\beta,\gamma,\delta, \mu$ be complex numbers defined in Section~\ref{subsubsec:toeplitz}.
Let $n \in \mathbb{N}$.
In the present setting,
\begin{align*}
&a=c=\sin\theta \ne 0,\quad\alpha=\beta=b=0,\quad\gamma=-\delta=1+\cos(\theta),\quad\mu=1.
\end{align*}
Then, if we find $\phi \in [0,2\pi)$ such that $\sin\phi\ne0$ and
$\phi$ satisfies
\begin{eqnarray*}
\sin^2\theta\sin((2n+1)\phi)-(1+\cos\theta)^2\sin(2n-1)\phi=0,
\end{eqnarray*}
$\lambda=2\sin\theta \cos\phi \in \sigma(A_n(\theta)+B_n(\theta))$.

From Proposition~\ref{fact:tridiagonaleigenvaluesdistinctcondition},
the eigenvalues of $A_n(\theta)+B_n(\theta)$ are distinct for any $n \in \mathbb{N}$.
Moreover, from statements (iv) and (v), statements (vi) and (vii) hold.

Next, we will prove (viii).
Let $\lambda_0 \in (-2\sin\theta,2\sin\theta)$ and $\eta > 0$
such that $|\lambda_0| + \eta<2\sin\theta$.
Define $r$, $J$, $R:[0,2\pi] \to \mathbb{R}$,
$R_0$, $C$ as follows:
\begin{align*}
r&=\dfrac{\sin\theta}{1+\cos\theta}, \quad R(\phi)=2((r-\cos \phi)^2+\sin^2\phi)\quad(\phi \in [0,2\pi]),\\
J&=\left\{\phi \in [0,2\pi]\middle|2\sin\theta\cos\phi \in [\lambda_0-\eta,\lambda_0+\eta]\right\},R_0=\inf \left(R\left(J\right)\right),\\
C&=\inf\left\{|1-\exp(i2\phi)|\ \middle|\phi \in J\right\}.
\end{align*}
Clearly $\sin\phi\ne 0$ for any $\phi \in J$, and $R_0$ and $C$ are positive.

Let $\lambda \in (\lambda_0-\eta,\lambda_0+\eta)$.
In the present setting,
for any eigenvalue $\lambda$ of $A_n(\theta)+B_n(\theta)$
and any eigenvector $u$ of $X$ corresponding to $\lambda$,
there exists $\phi \in J$ such that
\begin{align}
\label{eq:yueh33implication}
u_j&=\dfrac{2u_1i}{\sqrt{\omega}}\left\{\sin\theta\sin(j\phi)-(1+\cos\theta)\sin((j-1)\phi)\right\} \notag\\
&=-\dfrac{2u_1i}{\sqrt{\omega}}(1+\cos\theta)(\sin((j-1)\phi)-r\sin(j\phi))\quad(j=1,2,\dots,2n).
\end{align}
Then we have
\begin{align*}
&\quad-4\sum_{k=0}^{2n-1}(\sin (k\phi) - r \sin ((k+1)\phi))^2\\
&=\sum_{k=0}^{2n-1}(\exp (ik\phi)-\exp(-ik\phi)-r(\exp (i(k+1)\phi)-\exp(-i(k+1)\phi)))^2\\
&=\sum_{k=0}^{2n-1}(\exp (ik\phi)-\exp(-ik\phi)-r\exp (i(k+1)\phi)+r\exp(-i(k+1)\phi))^2\\
&=\sum_{k=0}^{2n-1}(\exp (i 2k\phi) + \exp (i(-2k\phi)) + r^2 \exp (i2(k+1)\phi)+r^2\exp(-i2(k+1)\phi)\\
&\quad-2-2r \exp (i (2k+1)\phi)+2r\exp (-i\phi)+2r\exp (i\phi)
-2r\exp (-i (2k+1)\phi)\\
&\quad-2r^2)\\
&=\sum_{k=0}^{2n-1}(\exp (i(2k\phi)) + \exp (i(-2k\phi)) + r^2 \exp (i2(k+1)\phi)+r^2\exp(-i2(k+1)\phi)\\
&\quad-2r \exp (i (2k+1)\phi)-2r\exp (-i (2k+1)\phi)-2(r^2-2(\cos\phi) r+1))\\
&=-2nR(\phi)+\dfrac{\exp (i4n\phi)-1}{\exp (i2\phi)-1}+\dfrac{\exp(-i4n\phi)-1}{\exp(-i2\phi)-1}+r^2\exp (i\phi)\dfrac{\exp (i4n\phi)-1}{\exp (i2\phi)-1}\\
&\quad+r^2\exp(-i\phi)\dfrac{\exp (-i4n\phi)-1}{\exp(-i2\phi)-1}-2r\exp (i\phi)\dfrac{\exp (i4n\phi)-1}{\exp (i2\phi)-1}\\
&\quad-2r\exp(-i\phi)\dfrac{\exp (-i4n\phi)-1}{\exp(-i2\phi)-1}\\
&=-2nR(\phi)+2\operatorname{Re}\left((\exp (i \phi) r^2-2\exp (i \phi)r+1)\dfrac{\exp (i4n\phi)-1}{\exp (i2\phi)-1}\right).
\end{align*}

Hence, for any sufficiently large $n \in \mathbb{N}$
\begin{align*}
&\left\|-2nR(\phi)+2\operatorname{Re}\left((\exp (i \phi) r^2-2\exp (i \phi)r+1)\dfrac{\exp (i4n\phi)-1}{\exp (i2\phi)-1}\right)\right\|\\
&\ge 2nR_0 - \dfrac{4(r+1)^2}{C}.
\end{align*}
It follows that $\dfrac{|u_{2n}|}{\|u\|} \to 0\quad\left(n \to \infty\right)$.
Consequently, statement (viii) holds.
\end{proof}
\begin{proof}[Proof of (ix)]
In Theorem~4 in \cite{ABottcher2026},
it is shown that
$2 \in \sigma(A(\theta)+B(\theta))$.
Hence, 
there exists a unit vector $u \in \ell^2$ such that
$|((A(\theta)+B(\theta))u,u)|>2-\dfrac{\varepsilon}{2}$.
From Proposition~\ref{prop:oneshiftedcyclicrepresentation},
for any sufficiently large $n \in \mathbb{N}$ there exists a unit vector $u_n \in \mathbb{C}^{2n}$ such that
$|((A_n(\theta)+B_n(\theta))u_n,u_n)|>2-\varepsilon$.
Fix such $n \in \mathbb{N}$.
Then $(A_n(\theta)+B_n(\theta))$ has an eigenvalue $\lambda \in [-2,-2+\varepsilon]\cup[2-\varepsilon,2]$.

From (vi),
$\#(\sigma(A_n(\theta)+B_n(\theta)) \cap [-2\sin\theta,2\sin\theta]) \ge 2n-2$.
Moreover, from Proposition~\ref{fact:infinitedihedralgroupfinitedimensionalrepresentationdecomposition}
and
Proposition~\ref{fact:infinitedihedralgroupfinitedimensionalrepresentation},
 $-\gamma \in (\sigma(A_n(\theta)+B_n(\theta)) \cap [-2\sin\theta,2\sin\theta])$ for any $\gamma \in(\sigma(A_n(\theta)+B_n(\theta)) \cap [-2\sin\theta,2\sin\theta])$.
Since $\operatorname{tr}(A_n(\theta)+B_n(\theta))=0$,
$-\lambda \in A_n(\theta)+B_n(\theta)$.
Consequently, we have
\begin{equation*}
\sigma(A_n(\theta)+B_n(\theta)) \cap [-2\sin\theta,2\sin\theta]^c=\{-\lambda,\lambda\}.
\end{equation*}
\end{proof}
\begin{proof}[Proof of (x)]
Note that
\begin{align*}
\lim_{n \to \infty}f_{2n}\left(\dfrac{\pi}{2(2n+1)}\right)=\lim_{n \to \infty}f_{2n}\left(\pi-\dfrac{\pi}{2(2n+1)}\right)=1,
\end{align*}
and for any $n \in \mathbb{N}$
\begin{equation*}
\lim_{\phi \to \frac{\pi}{2n+1}-0}f_{2n}(\phi)=\lim_{\phi \to \left(\pi-\frac{\pi}{2n+1}\right)+0}f_{2n}(\phi)=\infty.
\end{equation*}
Since for any $\theta \in \left(\dfrac{\pi}{2},\pi\right)$
\begin{equation*}
\dfrac{\sin^2\theta}{(1+\cos\theta)^2}=\tan^2\left(\dfrac{\theta}{2}\right)>1,
\end{equation*}
from (iii), we have that  for any sufficiently large $n \in \mathbb{N}$
\begin{equation*}
1 \le \min\{\#(Z_{2n} \cap I_{2n,0}),\#(Z_{2n}\cap I_{2n,2n})\}.
\end{equation*} 
Fix such $n \in \mathbb{N}$.
Combining this and (vi), it follows that
\begin{equation*}
\sigma(A_n(\theta)+B_n(\theta)) \subset [-2\sin\theta,2\sin\theta].
\end{equation*}
\end{proof}

We give the proof of Example~\ref{example:constantangleoneshiftedform} as follows.

\begin{proof}[Proof of (i) of {\rm{Example~\ref{example:constantangleoneshiftedform}}}]
Let $\lambda \in (0,2\sin\theta)$.
From statements (vi) and (vii) of Lemma~\ref{lemma:constantshiftedform},
there exists $\{\eta_n\}_{n=1}^\infty \subset (-2\sin\theta,2\sin\theta)$ and 
$\{u_n\}_{n=1}^\infty$ such that $\displaystyle \lim_{n \to \infty}\eta_n = \lambda$, and for any $n \in \mathbb{N}$, $u_n$ is a unit eigenvector of $A_n(\theta)+B_n(\theta)$ corresponding to $\eta_n$.
Then for any $n \in \mathbb{N}$,
$
P_n(\theta)u_n=P(\theta)u_n, \|(Q_n(\theta)-Q(\theta))u_n\| \le 2|u_{n,2n}|
$,
where $u_{n,2n}$ is the $2n$-th component of $u_n$.
From statement (viii) of Lemma~\ref{lemma:constantshiftedform}, we have
$
\displaystyle \lim_{n \to \infty} \|(Q_n(\theta)-Q(\theta))u_n\|=0
$.
It follows that $\lambda \in F(A(\theta)+B(\theta))$.
Hence we have $(-2\sin\theta,2\sin\theta) \subset F(A(\theta)+B(\theta))$.

We prove that $F(A(\theta)+B(\theta)) \subset [-2\sin\theta,2\sin\theta]$.
In Theorem~4 in \cite{ABottcher2026},
it is shown that
$\sigma(A(\theta)+B(\theta)) \subset [-2\sin\theta,2\sin\theta] \cup \{2\}$.
Since $F(A(\theta)+B(\theta)) \subset (0,2)$,
we have $F(A(\theta)+B(\theta)) \subset  (0,2\sin\theta]$.
\end{proof}
\begin{proof}[Proof of (ii) of {\rm{Example~\ref{example:constantangleoneshiftedform}}}]
From statements (vi)-(x) of \\Lemma~\ref{lemma:constantshiftedform}, we have
\begin{equation*}
\displaystyle \limsup_{n \to \infty}b(\lambda_n) \le \sup_{\lambda \in F(A(\theta)+B(\theta))}b(\lambda)
.
\end{equation*}
Moreover, from statement (ii) of Theorem~\ref{theorem:refinementtsirelsoninequality}, we have
\begin{equation*}
\displaystyle \sup_{\lambda \in F(A(\theta)+B(\theta))}b(\lambda)=\lim_{n \to \infty}b(\lambda_n)=\rho([A(\theta),B(\theta)]).
\end{equation*}
\end{proof}
\begin{proof}[Proof of (iii) of {\rm{Example~\ref{example:constantangleoneshiftedform}}}]
This follows directly from statement (i) and the fact that
$F(\tau(g_1)+\tau(g_2))$ is invariant under any unitary intertwining operator for any unitary representation $(\tau,V)$ of $D_\infty$.
\end{proof}
\begin{proof}[Proof of (iv) of Example~\ref{example:constantangleoneshiftedform}]
From statement (i), we have
\begin{equation*}
\sup_{\lambda \in F(A(\theta)+B(\theta))}b(\lambda)
=
\left\{
\begin{array}{ll}
2|\sin(2\theta)| & \quad|\theta - \dfrac{\pi}{2}| > \dfrac{\pi}{4}, \vspace{2mm}\\
2 & \quad |\theta - \dfrac{\pi}{2}| \le \dfrac{\pi}{4}.
\end{array}
\right.
\end{equation*}
Moreover, from statement (ii), statement (iv) holds.
\end{proof}
}


\begin{thebibliography}{00}


\bibitem{ABottcher2010}
 A. B{\"o}ttcher, I. M. Spitkovsky,
 A gentle guide to the basics of two projections theory,
 Linear Algebra and its Applications, vol.432,
 p.1412-1459 (2010).
 https://doi.org/10.1016/j.laa.2009.11.002.

\bibitem{AChefles1997}
 A. Chefles, S. M. Barnett,
 Diagonalisation of the Bell-CHSH operator,
 Physics Letters A,
 vol.232, no.1-2, p.4-8 (1997).
 https://doi.org/10.1016/S0375-9601(97)00395-2.

\bibitem{LAKhalfin1985}
 L. A. Khalfin, B. S. Tsirelson,
 Quantum and quasi-classical analogs of Bell inequalities,
 in Symposium on the Foundations of Modern Physics, World Scientific Publishing, p.441-460 (1985).

\bibitem{LJLandau1987}
 L. J. Landau,
 Experimental tests of general quantum theories, Letters in Mathematical Physics,
 vol.14, p.33-40 (1987).
 https://doi.org/10.1007/BF00403467.

\bibitem{JFClauser1969}
 J. F. Clauser, M. A. Horne, A. Shimony,
 and R. A. Holt,
 Proposed experiment to test local hidden-variable theories,
 Physical Review Letter vol.23, p.880-884 (1970).
 https://doi.org/10.1103/PhysRevLett.23.880.

\bibitem{SPironio2009}
  S. Pironio, A. Acín, N. Brunner, N. Gisin, S. Massar, and
  V. Scarani,  
  Device-independent quantum key distribution secure against collective attacks,
  New Journal of Physics, vol.11 (2009).
  https://doi.org/10.1088/1367-2630/11/4/045021. 

\bibitem{HBehncke1971}
 H. Behncke, Projections in Hilbert space II,
 Tohoku Mathematical Journal, vol.23, p.349-352 (1971).
 https://doi.org/10.2748/tmj/1178242586.

\bibitem{Primaatmaja2023}
 I. W. Primaatmaja, K. T. Goh, E. Y. Z. Tan, J. T. F. Khoo, S. Ghorai, and C.C.W. Lim,
 Security of device-independent quantum key distribution protocols: a review,
 Quantum, vol.7, p.932 (2023).
 https://doi.org/10.22331/q-2023-03-02-932.

\bibitem{MMcKague2012}
  M. McKague, T. H. Yang, and V. Scarani,
  Robust self-testing of the singlet,
  Journal of Physics A: Mathematical and Theoretical,
  vol.45, no.45:455304 (2012).
  https://doi.org/10.1088/1751-8113/45/45/455304.

\bibitem{PRHalmos1969}
 P. R. Halmos,
 Two subspaces,
 Transactions of the American Mathematical Society,
 vol.144,
 p.381-389 (1969).
 https://doi.org/10.2307/1995288.

\bibitem{GAllan1998}
 G. Allan, Invariant subspaces for pairs of projections,
 Journal of the London Mathematical Society,
 vol.57, no.2,
 p.449-468 (1998).
 https://doi.org/10.1112/S0024610798006103.

\bibitem{BSTsirelson1980}
 B. S. Tsirelson,
 Quantum generalizations of Bell's inequality,
 Letters in Mathematical Physics,
 vol.4,
 p.93-100 (1980).
 https://doi.org/10.1007/BF00417500.

\bibitem{BSTsirelson1987}
 B. S. Tsirelson,
 Quantum analogues of the Bell inequalities.
 The case of two spatially separated domains,
 Journal of Soviet Mathematics, vol.36, p.557-570 (1987).
 https://doi.org/10.1007/BF01663472.

\ifthenelse{\value{arxiv} = 1}{
\bibitem{YFujii2025}
 Y. Fujii, T. Tsurumaru,
 Estimating the spectral radius of Bell-type operator via finite dimensional approximation of orthogonal projections,
 arXiv preprint arXiv:2511.10939v1 (2025).
} {
 Y. Fujii, T. Tsurumaru,
 Estimating the spectral radius of Bell-type operator via finite dimensional approximation of orthogonal projections,
 arXiv preprint arXiv:2511.10939 (2025).
}

\bibitem{ABottcher2026}
 A. B{\"o}ttcher, I. M. Spitkovsky,
 The wanted extension of Fujii and Tsurumaru's formula for the spectral radius of the Bell-CHSH operator,
 arXiv preprint arXiv:2601.09392 (2026).

\bibitem{MSugiura1990}
 M. Sugiura, 
 Unitary representations and harmonic analysis,
 John Wiley $\&$ Sons,
 New York, London, Sydney, Toronto,
 2nd edition, 1990, ISBN0444885935.

\bibitem{TSChihara1978}
 T. S. Chihara, An introduction to orthogonal polynomials,
 Gordon and Breach, New York, 1978, ISBN978-0486479293.

\ifthenelse{\value{arxiv} = 1}{
\bibitem{WCYueh2008} W. C. Yueh,
 Explicit eigenvalues and inverses of tridiagonal Toeplitz matrices with four perturbed corners,
  the ANZIAM Journal, vol.49, no.3, p.361-387 (2008).
  https://doi.org/10.1017/S1446181108000102.
}
 
\bibitem{KYoshida1980}
 K. Yoshida, Functional Analysis, Springer-Verlag, Berlin Heidelberg and New York, 6th edition, 1980, ISBN0-387-10210-8.

\bibitem{TTsurumaru2025}
 T. Tsurumaru and Y. Fujii, Security proof of device-independent quantum key distribution against collective attacks in infinite dimensions, in preparation.

\bibitem{LAKhalfin1992}
 L. A. Khalfin, B. S. Tsirelson,
 Quantum/classical correspondence in the light of Bell's inequalities,
 Foundation of Physics, vol.22, p.879-948 (1992).
 https://doi.org/10.1007/BF01889686.

\end{thebibliography}
\end{document}